\journal{Computer Methods in Applied Mechanics and Engineering}
\date{\today}
\Crefname{listing}{code snippet}{code snippet}  
\Crefname{listing}{Code Snippet}{Code Snippet}
\definecolor{M1}{RGB}{230, 240, 255}
\definecolor{M2}{RGB}{255, 230, 230}
\definecolor{M3}{RGB}{230, 255, 230}
\definecolor{M4}{RGB}{255, 255, 220}
\newcommand{\ba}{\mathbf a}
\newcommand{\bh}{\mathbf h}
\newcommand{\bj}{\mathbf j}
\newcommand{\bw}{\mathbf w}
\newcommand{\bx}{\mathbf x}
\newcommand{\by}{\mathbf y}
\newcommand{\bz}{\mathbf z}
\newcommand{\bmu}{\boldsymbol \mu}
\newcommand{\calb}{\mathcal B}
\newcommand{\caln}{\mathcal N}
\newcommand{\R}{\mathbb R}
\newtheorem{theorem}{Theorem}[section]
\newtheorem{corollary}[theorem]{Corollary}
\newtheorem{definition}[theorem]{Definition}
\newtheorem{proposition}[theorem]{Proposition}
\newtheorem{remark}[theorem]{Remark}
\newcommand{\model}{ELM-FBPINN with RRQR filtering}
\newcommand{\modelss}{ELM-FBPINNs with RRQR filtering}
\newcommand{\mat}[1]{{\bf #1}}
\begin{document}
\begin{frontmatter}



\title{Local Feature Filtering for Scalable and Well-Conditioned Domain-Decomposed Random Feature Methods}

\author[1]{Jan Willem van Beek}\ead{j.w.v.beek@tue.nl}
\affiliation[1]{organization = {Eindhoven University of Technology},
addressline={PO Box 513}, city={Eindhoven},
postcode={5600 MB},
country={The Netherlands}}
\author[1]{Victorita Dolean\corref{cor1}}\ead{v.dolean.maini@tue.nl}
\cortext[cor1]{Corresponding author}
\author[2]{Ben Moseley} \ead{b.moseley@imperial.ac.uk}
\affiliation[2]{organization = {Imperial College London},
addressline={Exhibition Rd, South Kensington}, city={London},
postcode={SW7 2AZ},
country={United Kingdom}}

\begin{abstract}
Random Feature Methods (RFMs) \cite{chen:2022:BTM} and their variants such as extreme learning machine finite-basis physics-informed neural networks (ELM-FBPINNs) \cite{Anderson:2024:ELM} offer a scalable approach for solving partial differential equations (PDEs) by using localized, overlapping and randomly initialized neural network basis functions to approximate the PDE solution and training them to minimize PDE residuals through solving structured least-squares problems. This combination leverages the approximation power of randomized neural networks, the parallelism of domain decomposition, and the accuracy and efficiency of least-squares solvers. However, the resulting structured least-squares systems are often \emph{severely ill-conditioned}, due to local redundancy among random basis functions and correlation introduced by subdomain overlaps, which significantly affects the convergence of standard solvers.

In this work, we introduce a \emph{block rank-revealing QR (RRQR) filtering and preconditioning} strategy that operates directly on the structured least-squares problem. First, local RRQR factorizations identify and remove redundant basis functions while preserving numerically informative ones, reducing problem size, and improving conditioning. Second, we use these factorizations to construct a right preconditioner for the global problem which preserves block-sparsity and numerical stability. Third, we derive deterministic bounds of the condition number of the preconditioned system, with probabilistic refinements for small overlaps.

We validate our approach on challenging, multi-scale PDE problems in 1D, 2D, and $(2+1)$D, demonstrating reductions in condition numbers by up to eleven orders of magnitude, LSQR convergence speedups by factors of $10$--$1000$, and higher accuracy than both unpreconditioned and additive Schwarz-preconditioned baselines, all at significantly lower memory and computational cost. These results establish RRQR-based preconditioning as a scalable, accurate, and efficient enhancement for RFM-based PDE solvers.
\end{abstract}
\begin{keyword}
random feature methods \sep physics-informed machine learning
\sep overlapping domain decomposition \sep  rank-revealing QR factorization \sep preconditioning of least-squares problems \sep numerical solution of PDEs


	\MSC 65N55 \sep 65N35 \sep 65F10
\end{keyword}

\end{frontmatter}

\section{Introduction}
Accurate and efficient numerical solution of partial differential equations (PDEs) is a fundamental challenge in computational science and engineering. Traditional discretization approaches such as finite difference (FDM) and finite element methods (FEM) are well-developed and robust but can face difficulties in high-dimensional, complex-geometry, or multi-scale settings, where accuracy, efficiency, and scalability can be hard to balance.  

In recent years, \emph{scientific machine learning} (SciML) has emerged as a promising alternative. Among SciML methods, \emph{physics-informed neural networks} (PINNs) \cite{Lagaris1998,Raissi2019} recast the PDE solution into a residual minimization problem, using a neural network to directly approximate the solution and enabling a mesh-free solution and easy integration with inverse problems. The idea of neural-network-based PDE solvers dates back to \cite{dissanayake:1994:NTB}, and the approach was extended by \cite{Lagaris1998} to handle boundary and initial value problems by constructing trial functions that satisfy boundary conditions exactly.  

While PINNs offer appealing properties, they exhibit well-known limitations: they suffer from spectral bias \cite{Rahaman2018}, struggle with multi-scale PDEs \cite{Moseley2023,Wang2021d}, and exhibit super-linear growth in training complexity \cite{Moseley2022,Moseley2023}. Domain decomposition strategies have been proposed to address these shortcomings. Conservative PINNs (cPINNs) \cite{jagtap2020conservative} enforce flux continuity across non-overlapping subdomains; extended PINNs (XPINNs) \cite{jagtap2020extended} generalize to arbitrary space-time decompositions; $hp$-variational PINNs ($hp$-VPINNs) \cite{kharazmi2021hp} incorporate piecewise polynomial test functions; and the deep domain decomposition method (DeepDDM) \cite{li2020deep} applies Schwarz iterations. Finite-basis PINNs (FBPINNs) \cite{Moseley2023} combine overlapping Schwarz decomposition with localized neural networks, later enhanced by iterative training techniques \cite{dolean2022finite} and extended to multilevel decomposition \cite{Dolean:MDD:2024}.  

In FBPINNs, the solution domain $\Omega$ is split into overlapping subdomains $\Omega = \cup_{j=1}^J \Omega_j$, each equipped with a neural network $u_j(\mathbf{x},\bm{\theta}_j)$, with $\mathbf{x}$ denoting coordinates in the domain and $\bm{\theta}_j$ denoting the network's learnable parameters. The global solution approximation is constructed as
\(
u(\mathbf{x},\bm{\theta}) = \sum_{j=1}^J \omega_j(\mathbf{x}) \, u_j(\mathbf{x},\bm{\theta}_j),
\)
where $\omega_j(\mathbf{x})$ are smooth, localized window functions forming a partition of unity. This localization improves accuracy and mitigates spectral bias but leaves the method computationally expensive, since all network parameters must be optimized via non-convex gradient descent.

A powerful acceleration idea is to replace subdomain networks with \emph{extreme learning machines} (ELMs) \cite{Huang2006}, where hidden layer weights are randomized and fixed, which reduces training to solving a linear least-squares problem for the output weights. In this way, the PDE solution is approximated with localized, overlapping and randomly initialized neural network basis functions (or \emph{features}) and yields a domain-decomposed variant of the \emph{Random Feature Method} (RFM) \cite{chen:2022:BTM}, which is akin to earlier randomized network paradigms such as random vector functional-links (RVFLs) \cite{pao1992functional} and reservoir computing \cite{Jaeger:2009:RCA}. RFMs have also been combined with structured sampling strategies, such as the SWIM method \cite{datar:2024:PDE}, to improve accuracy and efficiency.

Despite these advantages, RFMs and ELM-FBPINNs face a major challenge: the least-squares system is often \emph{severely ill-conditioned}. Two main factors seem to contribute in a decisive manner: (i) \textbf{Local redundancy}: Within each subdomain, randomly generated features may be nearly linearly dependent; (ii) \textbf{Overlap interference}: Features overlapping between neighboring subdomains introduce correlations that affect conditioning.

Recent work by \cite{Shang_Heinlein_Mishra_Wang_2025} proposed addressing these issues via subdomain singular value decomposition (SVD) for local dimensionality reduction, combined with an additive Schwarz (AS) preconditioner applied to the normal equations of the least-squares system. While this approach can improve solver convergence, it has three significant drawbacks:  
(i) Preconditioning the normal equations squares the condition number $\kappa(\mat{M})$ of the global least-squares matrix, $\mat{M}$,  
(ii) Forming $\mat{M}^\top \mat{M}$ destroys sparsity,  
and (iii) SVD and AS preconditioner assembly incur high memory and computational cost. In this work, we propose a \emph{direct least-squares preconditioning strategy} based on \emph{subdomain rank-revealing QR (RRQR)} factorizations \cite{Chan_1987}. The proposed method has three key advantages:
\begin{enumerate}
    \item \textbf{Feature selection}: Local RRQR identifies and retains numerically informative features while discarding redundant ones, reducing problem size and improving conditioning.
    \item \textbf{Sparse, stable preconditioning}: We assemble a block-structured right preconditioner for the global least-squares problem directly from the filtered QR factors, preserving sparsity and avoiding the stability loss of normal-equation preconditioning.
    \item \textbf{Theoretical insight}: We provide a deterministic analysis of the condition number for the preconditioned system. Probabilistic refinements, based on a random matrix model for the local feature blocks, are given in \ref{appendix:RMT}.
\end{enumerate}

Our contributions are validated through extensive numerical experiments on PDEs in 1D, 2D, and $(2+1)$D. We demonstrate up to eleven orders of magnitude improvements in conditioning, solver convergence speedups by factors of $10$--$1000$, and higher accuracy over both unpreconditioned and AS-preconditioned baselines, at significantly lower memory and computational cost.  

The remainder of the paper is organized as follows. In \Cref{sec:background} we review RFMs, ELM-FBPINNs, and relevant preconditioning strategies, and recall the RRQR factorization. In \Cref{sec:contribution} we present our proposed RRQR filtering and preconditioning method, together with a theoretical analysis of conditioning in the presence of overlap. In \Cref{sec:numerical-results} we evaluate the method on a range of PDE problems, including strong and weak scaling studies, and compare against established baselines. We conclude this section with a discussion of limitations and directions for future work.

\section{Background} 
\label{sec:background}
In this section we begin by formulating the Random Feature Method (RFM) for solving boundary value problems (BVPs). The goal is to approximate the solution of a linear PDE using a neural ansatz that combines randomized neural network basis functions (or features) with localized support. By fixing the random feature parameters and optimizing only the output weights, we reduce the problem to a structured least-squares problem. This section details the mathematical setup, introduces the learning task, and reformulates this task into a least-squares problem in matrix form to set the stage for subsequent analysis of conditioning and preconditioning strategies.

\subsection{RFM for BVP a.k.a. the ELM-FBPINN method}\label{sec:ELM-FBPINN-background}
We start with a PDE defined in a domain $\Omega \subset \R^d $
\[ \mathcal{N}[u(\bx)] = f(\bx), \ \ \bx \in \Omega, \]
under boundary conditions given by 
\[ \mathcal{B}_l[u(\bx)] = g_l(\bx), \ \ \bx \in \Gamma_l \subset \partial \Omega, \ \ l=1,\dots,L, \]
where $\caln$ and the $\calb_l$ are assumed to be linear operators and $f$ and $g_l$ are forcing and boundary functions. We approximate the solution $u$ with a piecewise neural network, $\hat{u} \approx u,$ 
\begin{equation}
\label{eq:elm_solution}
\hat{u}(\bx,\ba) = \sum_{j = 1}^J \ \omega_j(\bx) \sum_{k = 1}^K \ a_{jk} \phi_{jk}(\bx,\theta_{jk}),
\end{equation}
with $\phi_{jk}(\bx,\theta_{jk})$ being any parameterised neural network function with a scalar output. In this paper, without loss of generality, we use shallow fully connected networks of the form
\begin{equation}
\label{eq:fcn_depth_h}
\begin{aligned}
&\bz_{j}^{(0)} = \bx, \\
&\bz_{j}^{(l)} = \Xi\left( W_{j}^{(l)} \bz_{j}^{(l-1)} + b_{j}^{(l)} \right), \quad l = 1,\dots,h-1, \\
&\phi_{jk} = \Xi\left( \bw_{jk}^{\top} \bz_{j}^{(h-1)} \right),
\end{aligned}
\end{equation}
where $\Xi$ is a nonlinear activation function applied elementwise, and the network parameters are given by $\theta_{jk} = \{W_{j}^{(l)}, b_{j}^{(l)}\}_{l=1}^{h-1} \cup \{\bw_{jk}\}$. The $\omega_j$ are \emph{window functions}, with local support on subdomains $\Omega_j$ that cover $\Omega$. These subdomains are allowed to overlap. The $a_{jk}$ are the output weights of each localized neural network.

The parameters $\theta_{jk}$ are now randomized and fixed, so only the coefficients $a_{jk}$ remain as free parameters. The idea of randomizing a subset of the parameter space gained prominence in the machine learning space with the publication of \cite{Rahimi_Brecht_2007}. 
More recently, the combination of feature randomization and locally supported window functions, the latter inspired by domain-decomposition methods, has been explored in \cite{Anderson:2024:ELM}, \cite{chen:2022:BTM}, \cite{Shang_Heinlein_Mishra_Wang_2025}.

The model parameters $\bm{a} = \{a_{jk}\}$ are determined by minimizing a loss functional that penalizes deviations from both the PDE and its boundary conditions at a set of collocation and boundary points, in a similar fashion to PINNs \cite{Lagaris1998,Raissi2019}. Specifically, we define the loss function as
\begin{equation}\label{eq:ELMFBPINN_loss}
    \mathcal{L}(\bm{a}) = \frac{1}{N_I} \sum_{n=1}^{N_I} \left( \mathcal{N}[\hat{u}(\bx_n, \bm{a})] - f(\bx_n) \right)^2 
    + \sum_{l=1}^{L} \frac{\lambda_l}{N_B^{(l)}} \sum_{i=1}^{N_B^{(l)}} \left( \mathcal{B}_l[\hat{u}(\bx_i^{(l)}, \bm{a})] - g_l(\bx_i^{l)}) \right)^2.
\end{equation}
Here, $\{\bx_n\}_{n=1}^{N_I}$ are collocation points sampled over the interior of the domain $\Omega$, and $\{\bx_i^{(l)}\}_{i=1}^{N_B^{(l)}}$ are collocation points sampled on the boundary subsets $\Gamma_l \subset \partial\Omega$, associated with boundary operators $\mathcal{B}_l$. The weights $\lambda_l$ are hyperparameters used to balance the strength of each constraint in the optimization problem.

Thanks to the linearity of both the differential operator $\mathcal{N}$ and the boundary operators $\mathcal{B}_l$, the loss functional \eqref{eq:ELMFBPINN_loss} is quadratic in the unknown coefficients $\bm{a}$. This allows us to reformulate the problem as a structured linear least-squares problem:
\begin{equation} \label{eq:ELMFBPINN_ls_all_constraints}
    \min_{\bm{a} \in \mathbb{R}^{JK}} \left\| \mat{N} \bm{a} - \mathbf{f} \right\|^2 + \left\| \mat{B} \bm{a} - \mathbf{g} \right\|^2,
\end{equation}
where $\mat{N}$ and $\mat{B}$ are the system matrices corresponding to the PDE and boundary residuals, and $\mathbf{f}$ and $\mathbf{g}$ are the vectors of evaluated right-hand side functions at the interior and boundary collocation points, respectively.

\Cref{eq:ELMFBPINN_ls_all_constraints} can be expressed as a single global least-squares problem given by

\begin{equation} \label{eq:ELMFBPINN_ls}
    \min_{\bm{a} \in \mathbb{R}^{JK}} \left\| \mat{M} \bm{a} - \bh \right\|^2,
\end{equation}
where $\mat{M}$ and $\bh$ are defined as
\[
\mat{M} = \begin{bmatrix} \mat{N} \\ \mat{B} \end{bmatrix}, \quad 
\bh = \begin{bmatrix} \mathbf{f} \\ \mathbf{g} \end{bmatrix}.
\]
This global problem combines the residual minimization over the PDE and the boundary conditions into a single linear system, where each row block of $\mat{M}$ and $\bh$ corresponds to either interior or boundary collocation points.

\paragraph{Indexing conventions and notations} Before discussing the structure of the system matrices, we introduce some convenient indexing conventions:

\begin{itemize}
    \item Let $\ell = (j - 1)K + k$ map the pair $(j, k)$ from \Cref{eq:elm_solution} to a single global neuron index.
    \item Let $p = \sum_{k = 1}^{l - 1} N_B^{(k)} + i$ map the pair $(i, l)$ from \Cref{eq:ELMFBPINN_loss} to a global index over boundary collocation points.
    \item Let $N_B = \sum_{l = 1}^L N_B^{(l)}$ denote the total number of boundary points.
\end{itemize}

We now define the vectors and matrices involved in the least-squares problem \eqref{eq:ELMFBPINN_ls}:

\begin{itemize}
    \item The vector $\mathbf{f} = \{\frac{1}{\sqrt{N_I}}f(\bx_n)\}_{n = 1}^{N_I}$ contains evaluations of the PDE right-hand side at interior collocation points.
    \item The vector $\mathbf{g} = \{g_p\}_{p = 1}^{N_B}$, where $g_p = \sqrt{\frac{\lambda_l}{N_B^{(l)}}} g_l(\bx_i^{(l)})$, contains evaluations of the boundary data at boundary collocation points.
    \item The parameter vector is $\bm{a} = \{a_\ell\}_{\ell = 1}^{JK}$, where $a_\ell = a_{j,k}$.
\end{itemize}

Matrices $\mat{N} \in \mathbb{R}^{N_I \times JK}$ and $\mat{B} \in \mathbb{R}^{N_B \times JK}$ are defined component-wise by
\[
\begin{array}{l}
N_{n,\ell} = \frac{1}{\sqrt{N_I}}\mathcal{N}\left[(\omega_j \phi_{jk})(\bx_n)\right], \quad B_{p,\ell} = \sqrt{\frac{\lambda_l}{N_B^{(l)}}}\mathcal{B}_l\left[(\omega_j \phi_{jk})(\bx_i^{(l)})\right],
\end{array}
\]
where $\ell = (j - 1)K + k$, $n = 1, \ldots, N_I$, and $p = 1, \ldots, N_B$.

\paragraph{Domain decomposition structure}
Since the window functions $\omega_j$ have local support, the matrix $\mat{M} \in \mathbb{R}^{(N_I+N_B) \times JK}$ is sparse: the entry $M_{r,\ell}$ is nonzero only when $\omega_j(\bx_r) \neq 0$. 

A common problem in RFM is heavy linear dependence between the columns of $\mat{M}$ and consequently a large condition number. We want to exploit the sparsity of $\mat{M}$ to improve on this in an efficient way. To this end, we define the index sets
\[
I_j = \{r \mid \omega_j(\bx_r) \neq 0\}, \quad K_j = \{(j-1)K + 1, \ldots, jK\},
\]
where $I_j$ corresponds to interior and boundary points influenced by subdomain $j$, and $K_j$ to global neuron indices associated with that subdomain.

Let $\mat{M}_j = \mat{M}|_{I_j \times K_j}$ denote the submatrix of $\mat{M}$ restricted to $I_j \times K_j$. Introducing restriction matrices $\mat{V}_j \in \mathbb{R}^{|I_j| \times (N_I+N_B)}$ and $\mat{W}_j \in \mathbb{R}^{K \times JK}$, we express $\mat{M}_j$ as
\[
\mat{M}_j = \mat{V}_j \mat{M} \mat{W}_j^T.
\]
This yields the decomposition
\begin{equation} \label{DomainDecomp}
\mat{M} = \sum_{j = 1}^J \mat{V}_j^T \mat{M}_j \mat{W}_j.
\end{equation}


This decomposition \eqref{DomainDecomp} highlights the localized, block-sparse structure of the system matrix $\mathbf{M}$ induced by the use of compactly supported window functions. An example plot of this block-sparse structure is shown in \Cref{fig:harmonic-oscillator-condition}. While this sparsity is beneficial for computational efficiency, it does not fully mitigate the adverse effects of feature redundancy and near-linear dependence among columns, which can lead to ill-conditioning of the system.  To address this, several preconditioning strategies have been proposed in the literature that aim to exploit the domain-decomposed structure of $\mathbf{M}$. 

\subsection{Preconditioning the RFM via domain decomposition}
In this section, we briefly review the preconditioner introduced in \cite{Shang_Heinlein_Mishra_Wang_2025}, which is based on the additive Schwarz method applied to the normal equations. Starting from the decomposition of the system matrix into localized contributions,
\[
\mat{M} = \sum_{j=1}^J \mat{V}_j^T \mat{M}_j \mat{W}_j,
\]
where $\mat{M}_j$ is the submatrix associated with subdomain $\Omega_j$ and $\mat{V}_j$, $\mat{W}_j$ are restriction operators as introduced in \eqref{DomainDecomp}, the method proceeds by forming the Gramian matrix $\mathbf{A} = \mat{M}^T \mat{M}$. From this, one defines local Gramian blocks
\[
\mat{A}_j = \mat{W}_j \mat{A} \mat{W}_j^T = \mat{M}_j^T \mat{M}_j,
\]
which correspond to diagonal sub-blocks of $\mathbf{M}^T \mathbf{M}$ associated with each subdomain. After the application of a PCA to eliminate small singular values and reducing degrees of freedom, leading to a first improvement system conditioning, these blocks are used and assembled into the additive Schwarz preconditioner
\[
\mat{A}^{-1}_{\text{AS}} = \sum_{j=1}^J \mat{W}_j \mat{A}_j^{-1} \mat{W}_j^T.
\]

Although effective in improving convergence in certain settings, a key drawback of this approach is that it operates on the normal equations rather than the original least-squares problem. Forming the Gramian matrix $\mat{M}^T \mat{M}$ explicitly can lead to numerical difficulties: its condition number is the square of that of $\mat{M}$, i.e., $\kappa(\mat{M}^T \mat{M}) = \kappa(\mat{M})^2$, potentially exacerbating round-off errors and amplifying the effect of poorly scaled features.

To overcome the limitations associated with preconditioning the normal equations, we seek alternative strategies that operate directly on the original least-squares system. Beyond numerical stability, this shift is also driven by computational considerations: storing the Gramian matrix $\mathbf{M}^T \mathbf{M}$ can be costly, both in time and memory, and it typically destroys the sparsity pattern of $\mathbf{M}$. In contrast, the matrix $\mathbf{M}$ itself, due to the local support of the window functions, is often highly sparse, and preserving this sparsity is crucial for scalability in high-dimensional settings.

\subsection{Rank revealing $QR$ factorization}\label{sec:rrqr}
To exploit this structure efficiently, we turn to the rank-revealing QR (RRQR) factorization as an algebraic tool for dimension reduction and feature selection within each subdomain. Compared to singular value decomposition (SVD) also used in \cite{Shang_Heinlein_Mishra_Wang_2025}, RRQR offers a favorable trade-off: while both methods have $\mathcal{O}(mn^2)$ complexity for a $m \times n$ matrix with $m \geq n$, RRQR is typically more numerically stable, faster in practice, amenable to sparse input, and returns actual columns of the matrix rather than linear combinations, enhancing interpretability and enabling localized filtering. In the next section, we introduce the formal definition of RRQR and outline how it forms the basis of our preconditioning approach.

The RRQR-based strategy introduced here filters out redundant features within each subdomain, 
leading to a preconditioner that acts directly on the original least-squares system while 
preserving sparsity. In our setting, the RRQR is applied locally to each matrix $\mat{M}_j$ 
rather than to the global matrix $\mat{M}$.

Given a matrix $\mat{P} \in \mathbb{R}^{m \times n}$ with $m \ge n$, a 
$\sigma$-\emph{rank-revealing QR} (RRQR) factorization permutes the columns of $\mat{P}$ so that
\[
\mat{P} \, \Pi = \mat{Q} 
\begin{pNiceArray}{cc}
    \mat{R}_{11} & \mat{R}_{12} \\
     & \mat{R}_{22}
\end{pNiceArray},
\]
where $\mat{Q}$ has orthonormal columns, $\mat{R}_{11}$ is square and well-conditioned, 
and the trailing block $\mat{R}_{22}$ has norm at most $\sigma$. 
Algorithms such as the strong RRQR of Gu and Eisenstat~\cite{Gu_Eisenstat_1996} produce 
such a factorization, effectively identifying a well-conditioned basis for the dominant 
column space of $\mat{P}$ while discarding directions whose contribution is uniformly small.

This makes RRQR a natural tool for our local filtering step: it retains only the most 
informative features in each subdomain, reducing redundancy without compromising stability. 
Interestingly, in certain randomized settings -- such as when $\mat{P}$ arises from features of 
randomly weighted neural networks -- these well-conditioned bases also tend to emerge with high 
probability. Such probabilistic refinements are presented in \ref{appendix:RMT}.

\section{QR-based filtered RFM preconditioner} 
\label{sec:contribution}

This section presents our main theoretical and algorithmic contribution: a 
subdomain-wise filtering strategy based on $\sigma$-rank-revealing QR (RRQR) 
factorizations, designed to address the ill-conditioning that arises in 
least-squares problems from Random Feature Methods (RFM) with overlapping 
subdomains. The method has two goals:
\begin{itemize}
\item \textbf{Remove local redundancy:} Within each subdomain, excess random features 
      can become nearly linearly dependent, inflating the condition number of the 
      local system.
\item \textbf{Control overlap interference:} Even well-conditioned subdomains can 
      interact unfavorably through overlap, further degrading the conditioning of 
      the global system.
\end{itemize}

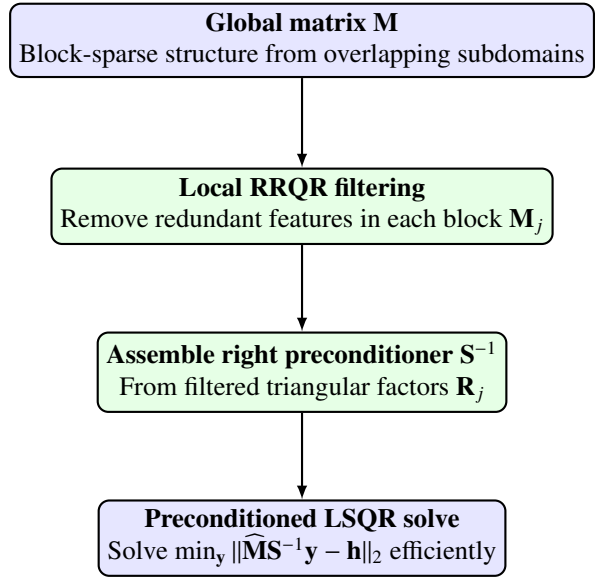
\begin{figure}[htbp]
\centering
\begin{tikzpicture}[node distance=2.2cm,>=latex,thick]

\tikzstyle{block} = [rectangle, draw, fill=blue!10, text centered, rounded corners, minimum height=2em, minimum width=5cm]
\tikzstyle{smallblock} = [rectangle, draw, fill=green!10, text centered, rounded corners, minimum height=2em, minimum width=4cm]
\tikzstyle{arrow} = [->, thick]

\node[block,align=center] (M) {\textbf{Global matrix $\mat{M}$}\\ 
Block-sparse structure from overlapping subdomains};
\node[smallblock, below of=M,align=center] (rrqr) {\textbf{Local RRQR filtering}\\
Remove redundant features in each block $\mat{M}_j$};
\node[smallblock, below of=rrqr,align=center] (prec) {\textbf{Assemble right preconditioner $\mat{S}^{-1}$}\\
From filtered triangular factors $\mat{R}_j$};
\node[block, below of=prec,align=center] (solve) {\textbf{Preconditioned LSQR solve}\\
Solve $\min_\by \| \widehat{\mat{M}} \mat{S}^{-1} \by - \mathbf{h} \|_2$ efficiently};

\draw[arrow] (M) -- (rrqr);
\draw[arrow] (rrqr) -- (prec);
\draw[arrow] (prec) -- (solve);

\end{tikzpicture}
\caption{Workflow of the proposed RRQR filtering and preconditioning method.}
\label{fig:workflow_teaser}
\end{figure}

Our approach applies RRQR locally to each subdomain matrix $\mat{M}_j$ to retain 
only the most informative columns---those corresponding to singular values above 
a threshold $\sigma$---thereby reducing redundancy while preserving sparsity. The 
triangular factors from these local decompositions are then assembled into a 
sparse right preconditioner $S^{-1}$ for the global least-squares problem.

Figure~\ref{fig:workflow_teaser} summarizes the overall process: 
starting from the block-sparse global matrix $\mat{M}$, we perform local RRQR 
filtering, assemble the preconditioner, and solve the preconditioned system 
efficiently with an iterative least-squares solver. A deterministic bound on the 
condition number of the preconditioned system, which captures the effect of 
overlap size on subdomain coupling, is given later in this section. 
Probabilistic refinements of this bound, using a random matrix model for the local 
QR factors, are presented in \ref{appendix:RMT}.

We now describe the local filtering, assembly of the global preconditioner, least squares solver, and condition number estimates in detail.

\subsection{Local filtering} The RRQR decomposition permutes the columns of a matrix to expose a well-conditioned 
leading block while ensuring that the trailing block is small in spectral norm. 
Applying this locally to each $\mat{M}_j$ allows us to identify a numerically full-rank 
basis for its column space, discard directions below the $\sigma$ threshold, and keep 
the local sparsity pattern intact. The process is summarized in 
Algorithm~\ref{alg:RRQR_filter}.
\begin{center}
\setlength{\textfloatsep}{0pt}
\setlength{\floatsep}{0pt}
\setlength{\intextsep}{0pt}
\begin{algorithm}
    \caption{$\sigma$-RRQR filtering method}\label{alg:RRQR_filter}
    \begin{algorithmic}
        \STATE Inputs: block-sparse matrix $\mat{M}$, subdomain count $J$, threshold $\sigma > 0$
        \FOR{$j = 1, \dots, J$}
          \STATE Compute $\sigma$-RRQR factorization of $\mat{M}_j$:
            \[
             \mat{M}_j \, \Pi_j = \mat{Q}_j
             \begin{pNiceArray}{cc}
              \mat{R}_j & * \\
                        & \mat{T}_j
             \end{pNiceArray}
            \]
          \STATE Retain columns associated with $\mat{R}_j$ to form $\widehat{\mat{M}}_j$
        \ENDFOR
        \STATE Assemble $\widehat{\mat{M}}$ from all $\widehat{\mat{M}}_j$
        \STATE Output: global filtered matrix $\widehat{\mat{M}}$
    \end{algorithmic}
\end{algorithm}
\end{center}
Note in our algorithm, the threshold $\sigma$ for each $\mat{M}_j$ is defined relative to the leading diagonal value of $\mat{R}_j$.


\subsection{Assembly of the preconditioner} Recall from the domain decomposition identity~\eqref{DomainDecomp} that the global matrix $\mat{M}$ can be written as
\[
\mat{M} = \sum_{j = 1}^J \mat{V}_j^T \mat{M}_j \mat{W}_j.
\]
After applying local QR factorization and filtering to each subdomain, i.e., $\widehat{\mat{M}}_j = \widehat{\mat{Q}}_j \widehat{\mat{R}}_j$, we obtain the global filtered decomposition
\[
\widehat{\mat{M}} = \sum_{j = 1}^J \mat{V}_j^T \widehat{\mat{Q}}_j \widehat{\mat{R}}_j \widehat{\mat{W}}_j.
\]
This suggests defining a preconditioner that operates directly on the original system and not on the normal equation without requiring full matrix assembly, hence being suitable for parallel computations. Specifically, we define the (right) preconditioner
\[
\mat{S}^{-1} = \sum_{j = 1}^J \widehat{\mat{W}}_j^T \widehat{\mat{R}}_j^{-1} \widehat{\mat{W}}_j.
\]
Because each matrix $\widehat{\mat{R}}_j$ is upper triangular, applying $\mat{S}^{-1}$ involves only inexpensive back-substitution. Moreover, the preconditioner never needs to be explicitly formed, only local factors $\widehat{\mat{R}}_j$ and selection matrices $\widehat{\mat{W}}_j$ are needed.

\begin{proposition}
\label{prop:prec}
The right preconditioned matrix simplifies to
\begin{equation}\label{eq:preconditioner}
\mat{Q}:= \widehat{\mat{M}} \mat{S}^{-1} = \sum_{j = 1}^J \mat{V}_j^T \widehat{\mat{Q}}_j \widehat{\mat{W}}_j.
\end{equation}
\end{proposition}

\begin{proof}
Substituting the factorizations $\widehat{\mat{M}}_j = \widehat{\mat{Q}}_j \widehat{\mat{R}}_j$ into the definition of $\widehat{\mat{M}}$ gives
\[
\widehat{\mat{M}} \mat{S}^{-1} = \sum_{j=1}^J \mat{V}_j^T \widehat{\mat{Q}}_j \widehat{\mat{R}}_j \widehat{\mat{W}}_j \widehat{\mat{W}}_j^T \widehat{\mat{R}}_j^{-1} \widehat{\mat{W}}_j = \sum_{j=1}^J \mat{V}_j^T \widehat{\mat{Q}}_j \widehat{\mat{W}}_j,
\]
using the identity $\widehat{\mat{R}}_j \widehat{\mat{R}}_j^{-1} = \mat{I}$ on the range of $\widehat{\mat{W}}_j$.
\end{proof}

This formulation isolates the orthogonal components $\widehat{\mat{Q}}_j$, which are numerically stable, and removes the influence of the potentially small singular values in $\widehat{\mat{R}}_j$, which would otherwise degrade the conditioning of the system. As a result, the preconditioned system is better suited for iterative solvers. Furthermore, the preconditioned system $\widehat{\mat{M}} \mat{S}^{-1}$ has the same block structure and sparsity pattern as $\mat{M}$ filtered on its dominant columns (as shown in \Cref{fig:harmonic-oscillator-condition}). Given this block structure is sparse, this allows us to use efficient sparse linear solvers to solve the preconditioned system.

\subsection{Least-squares solver} Having explained our filtering strategy and global preconditioner, our full workflow (as summarized by \Cref{fig:workflow_teaser}) for solving the original least-squares system is shown in \Cref{alg:RRQR_solver}.

\begin{center}
\setlength{\textfloatsep}{0pt}
\setlength{\floatsep}{0pt}
\setlength{\intextsep}{0pt}
\begin{algorithm}
    \caption{ELM-FBPINN least-squares solver with RRQR filtering and preconditioning}\label{alg:RRQR_solver}
    \begin{algorithmic}
        \STATE Inputs: sparse matrix $\mat{M}$ and vector $\bh$ in global least-squares problem $\left\| \mat{M} \ba - \bh \right\|^2$ (\Cref{eq:ELMFBPINN_ls}), number of subdomains $J$, relative threshold $\sigma > 0$
        \STATE Compute $\sigma$-RRQR factorization, $\widehat{\mat{M}}$, of $\mat{M}$, filtering dominant columns, using \Cref{alg:RRQR_filter}
        \STATE Compute right-side preconditioned system $\widehat{\mat{M}} \mat{S}^{-1}$ using \Cref{eq:preconditioner}
        \STATE Solve preconditioned least-squares problem $\min_\by \left\| \widehat{\mat{M}} \mat{S}^{-1} \by - \bh \right\|^2$ using a sparse linear solver (e.g. LSQR \cite{Paige1982})
        \STATE Compute least-squares solution for dominant columns $\hat{\ba} = \mat{S}^{-1} \by$, using sparse back-substitution
        \STATE Output: global least-squares solution, $\ba$, with values
        \[
        \ba = 
        \begin{cases}
        \hat{\ba} & \text{at dominant columns} \\
        0 & \text{otherwise}.
        \end{cases}
        \]
    \end{algorithmic}
\end{algorithm}
\end{center}
\subsection{Condition number estimate for overlapping subdomains}

In this section we will analyze the spectral properties of the preconditioned matrix from \Cref{prop:prec} in the simplified case of \( J \) overlapping subdomains seen as consecutive blocks in one dimension or that may arise from a strip-wise decomposition in higher dimensions.

Let \( J \) be the number of overlapping subdomains. Each subdomain contributes a block of \( K \) features. The matrix \( \mat{Q} \in \mathbb{R}^{N \times JK} \) is composed of column blocks $\widehat{\mat{Q}}_j$  corresponding to subdomains which are in turn subdivided into subblocks as follows:
\begin{itemize}
  \item Each subdomain \( j \) contributes a local feature matrix \( \mat{Q}_{jj} \in \mathbb{R}^{k_j \times K} \) for its interior,
  \item Overlapping regions between adjacent domains \( j \) and \( j+1 \) are represented by matrices \( \mat{Q}_{j,j+1}, \mat{Q}_{j+1,j} \in \mathbb{R}^{\ell_j \times K} \),
  \item The columns of each $\widehat{\mat{Q}}_j$ are orthonormal.
\end{itemize}

Then \( \mat{Q} \) has the block structure:
\begin{equation}
\label{eq:Q}
\mat{Q} = 
\begin{bmatrix}
\mat{Q}_{11} & 0 & 0 & \cdots & 0 \\
\mat{Q}_{21} & \mat{Q}_{22} & 0 & \cdots & 0 \\
0 & \mat{Q}_{32} & \mat{Q}_{33} & \cdots & 0 \\
\vdots & \ddots & \ddots & \ddots & \vdots \\
0 & \cdots & 0 & 0 & \mat{Q}_{JJ}
\end{bmatrix}
\in \mathbb{R}^{N \times JK}
\end{equation}
Given that the columns of each $\widehat{\mat{Q}}_j$ are orthonormal and that columns of different subdomains are disjoint, we compute:
\[
\mat{Q}^\top \mat{Q} = \mat{I}_{JK} + \mat{E}
\]
where \( \mat{E} \) contains off-diagonal interactions due to overlaps. Specifically, \( \mat{E} \) is a block tridiagonal matrix with:
\[
\mat{E}_{j,j+1} = \mat{Q}_{j+1,j}^\top \mat{Q}_{j,j}, \quad \mat{E}_{j+1,j} = \mat{E}_{j,j+1}^\top
\]
and all diagonal blocks are the identity:
\[
(\mat{Q}^\top \mat{Q})_{jj} = \mat{I}_K \quad \text{for all } j.
\]

So the matrix \( \mat{Q}^\top \mat{Q} \) has the form:
\[
\mat{Q}^\top \mat{Q} = 
\begin{bmatrix}
\mat{I} & \mat{A}_1 & 0 & \cdots & 0 \\
\mat{A}_1^\top & \mat{I} & \mat{A}_2 & \ddots & \vdots \\
0 & \mat{A}_2^\top & \mat{I} & \ddots & 0 \\
\vdots & \ddots & \ddots & \ddots & \mat{A}_{J-1} \\
0 & \cdots & 0 & \mat{A}_{J-1}^\top & \mat{I}
\end{bmatrix}
\]
with \( \mat{A}_j := \mat{Q}_{j+1,j}^\top \mat{Q}_{j,j} \in \mathbb{R}^{K \times K} \).

\begin{theorem}[Condition Number Bound for the Overlapping Block Matrix]
\label{th:cond-bound}
Let \( \mat{Q} \in \mathbb{R}^{N \times JK} \) be a block-structured matrix constructed from \( J \) overlapping subdomains with orthonormal columns defined as in \Cref{eq:Q}. 
Assume further that:
\[
\| \mat{A}_j \|_2 \le \alpha \quad \text{for all } j.
\]
Then the condition number of \( \mat{Q} \) satisfies:
\[
\kappa(\mat{Q}) \le \left( \frac{1 + 2\alpha}{1 - 2\alpha} \right)^{1/2}, \quad \text{provided } 2\alpha < 1.
\]
\end{theorem}

\begin{proof}

By the symmetry of \( \mat{E} \), its eigenvalues are real. Using the definition, the submultiplicativity of the spectral norm and Cauchy-Schwarz inequality, we can easily estimate norm of \( \mat{E} \):
\[
\| \mat{E} \|_2 \le  2\alpha.
\]
Thus, the eigenvalues \( \lambda \) of \( \mat{Q}^\top \mat{Q} = \mat{I} + \mat{E} \) satisfy
\[
1 - 2\alpha \le \lambda \le 1 + 2\alpha.
\]
Therefore, the condition number of \( \mat{Q} \) in the spectral norm is
\[
\kappa(\mat{Q}) = \sqrt{ \kappa( \mat{Q}^\top \mat{Q} ) } \le \left( \frac{1 + 2\alpha}{1 - 2\alpha} \right)^{1/2},
\]
as long as \( 1 - 2\alpha > 0 \), i.e., \( 2\alpha < 1 \).
\end{proof}

\begin{remark}
The matrix \( \mat{Q}^\top \mat{Q} \) is always symmetric and positive semidefinite, since it is a Gram matrix. All its eigenvalues are non-negative.

However, the bound above is only meaningful when \( \mat{Q}^\top \mat{Q} \) is \emph{strictly} positive definite and a stricter inequality holds.
Since we estimate \( \lambda_{\max}(\mat{E}) \le 2\alpha \), the condition \( 2\alpha < 1 \) ensures invertibility of \( \mat{Q}^\top \mat{Q} \) and validity of the condition number estimate.

The value \( \alpha = \max_j \| \mat{Q}_{j+1,j}^\top \mat{Q}_{j,j} \|_2 \) measures the strength of coupling between adjacent subdomains. The smaller the overlap-induced interactions, the better conditioned the global system is — which is somewhat counterintuitive from the domain decomposition perspective.
\end{remark}

\begin{remark}
In the special case of only two overlapping blocks (\( J = 2 \)), the matrix \( \mat{Q}^\top \mat{Q} \) has the form:
\[
\mat{Q}^\top \mat{Q} =
\begin{bmatrix}
\mat{I} & \mat{A}_1 \\
\mat{A}_1^\top & \mat{I}
\end{bmatrix},
\]
The eigenvalues of \( \mat{Q}^\top \mat{Q} \) are then \( 1 \pm \sigma_i(\mat{A}_1) \), and the condition number becomes:
\[
\kappa(\mat{Q}) = \left( \frac{1 + \| \mat{A}_1 \|_2}{1 - \| \mat{A}_1 \|_2} \right)^{1/2}, \quad \text{valid whenever } \| \mat{A}_1 \|_2 < 1.
\]
Hence, the condition for positive definiteness is simply \( \| \mat{A}_1 \|_2 < 1 \), which is strictly weaker than \( 2\alpha < 1 \). This makes the two-block case less restrictive and ensures well-conditioning as long as we have positive definiteness \( \mat{Q}^\top \mat{Q} \succ 0 \).
\end{remark}

Theorem~\ref{th:cond-bound} shows that, in the worst case, the conditioning of the 
preconditioned system depends only on the strength of nearest-neighbour interactions 
between subdomains, measured by \(\alpha\). If these interactions are weak (small $\alpha$), the global system remains well 
conditioned regardless of the number of subdomains or total features.  
Conversely, large overlap couplings drive $\alpha$ towards the $2\alpha < 1$ threshold, 
causing rapid growth in $\kappa(\mat{Q})$.  

The RRQR filtering directly influences $\alpha$.  
Recall that $\mat{A}_j = \mat{Q}_{j,j+1}^\top \mat{Q}_{j,j}$ involves basis vectors 
from two adjacent subdomains, restricted to the overlap region.  
When a $\sigma$-RRQR is applied to each local matrix $\mat{M}_j$, all columns 
associated with singular values smaller than $\sigma$ are discarded.  
This eliminates directions that are both weakly represented locally and highly 
correlated with features from the neighbouring subdomain.  
Consequently, the remaining columns $\widehat{\mat{M}}_j$ --- and their orthonormal 
factors $\widehat{\mat{Q}_{j}}$ --- exhibit reduced cross-interface correlation, 
leading to smaller $\|\mat{A}_j\|_2$.  
In qualitative terms, a larger $\sigma$ produces a more aggressively filtered 
basis, which reduces $\alpha$ and thus tightens the bound in 
Theorem~\ref{th:cond-bound}, at the cost of discarding more features.  
Quantitative estimates of $\|\mat{A}_j\|_2$ are obtained in ~\ref{appendix:RMT} using probabilistic models; a deeper theoretical analysis of these couplings lies beyond the scope of the present paper.

\section{Numerical experiments}
\label{sec:numerical-results}

In this section we assess the numerical performance of our proposed subdomain RRQR filtering and preconditioning strategy. We study how it affects the convergence, efficiency, and accuracy of ELM-FBPINNs when solving boundary and initial value problems (BVPs and IVPs) in one, two and three dimensions. We are particularly interested in how it improves the conditioning of the least-squares problem given by \Cref{eq:ELMFBPINN_ls} and its subsequent solution accuracy.

This section is structured as follows. First, in \Cref{sec:problems-studied}, we describe the problems studied in detail. Next, in \Cref{sec:common-implementation}, we explain the common implementation details across all experiments. Then, in \Cref{sec:baselines} we describe the baseline models we compare to. Finally, in \Cref{sec:results} we describe our numerical results in detail.

\subsection{Problems studied}
\label{sec:problems-studied}

This section describes the BVPs and IVPs studied. For each problem, we carry out ablation studies of the \model{}. We assess how its accuracy and convergence changes with different $\sigma$ values in the RRQR filtering, depths of subdomain networks, and subdomain network activation functions. We also carry out strong and weak scaling studies, where we assess how the total number of subdomains, $J$ in \Cref{eq:elm_solution}, the degree of overlap between subdomains, and number of basis functions, $K$ in \Cref{eq:elm_solution}, affects performance, and how our method performs when the solution complexity increases.

\subsubsection{Harmonic oscillator in 1D}\label{sec:harmonic_oscilator_problem}

The first problem we study is the damped harmonic oscillator in one dimension. This models the displacement, $u(t)$, of a mass on a spring through time, and is given by
\begin{equation} \label{eq:harmonic_oscilator_problem}
\begin{split}
    m\frac{d^2 u}{d t^2} + \mu \frac{d u}{d t} + ku &= 0 \quad \text{in } \Omega = [0,1], \\
    u(0) &= 1, \\
    \frac{d u}{d t}(0) &= 0,
\end{split}
\end{equation}
where $m$ is the mass of the oscillator, $\mu$ is the coefficient of friction, and $k$ is the spring constant. We focus on solving the problem in the under-damped state, which occurs when
\begin{equation} \label{eq:under-damped}
\delta < \omega_0, \quad \text{where} \quad \delta = \dfrac{\mu}{2m}, \quad \omega_0 = \sqrt{\dfrac{k}{m}}.
\end{equation}
In this case, the exact solution is known and is given by
\begin{equation}
u(t) = e^{-\delta t} \left(2 A \cos(\phi + \omega t)\right), \quad \text{with} \quad \omega = \sqrt{\omega_0^2 - \delta^2},
\end{equation}
where $A$ and $\phi$ can be derived from the initial conditions. For all cases studied, we fix $\mu=4$, and $m=1$, and when performing weak scaling studies we increase the solution complexity by increasing $\omega_0$.

\subsubsection{Multi-scale Laplacian in 2D}\label{sec:laplace_problem}

The second problem we study is a multi-scale Laplacian problem in two dimensions, as defined in \cite{Dolean:MDD:2024}, given by
\begin{equation} \label{eq:laplace_problem}
\begin{split}
- \Delta u & = f \quad \text{in } \Omega = [0,1]^2, \\
u & = 0 \quad \text{on } \partial\Omega,
\end{split}
\end{equation}
with a source term given by
\begin{equation} \label{eq:multiscale_laplace_f}
f(x_1,x_2) = \frac{2}{n}\sum_{i=1}^n \big( \omega_i \pi \big)^2 \sin({\omega_i \pi x_1}) \, \sin({\omega_i \pi x_2}).
\end{equation}
Then, the exact solution is given by 
\begin{equation}
u(x_1,x_2) = \frac{1}{n}\sum_{i=1}^n \sin({\omega_i \pi x_1}) \, \sin({\omega_i \pi x_2}).
\end{equation}
In this study we fix $\omega_i = 2^{i}$. The solution to this problem is highly multi-scale, with the number of multi-scale components controlled by the parameter $n$. When performing weak scaling studies we increase the solution complexity by increasing $n$.

\paragraph{Hard boundary constraints}{
For this problem, we use a hard-constraining approach, as originally described by \cite{Lagaris1998}, to assert the boundary conditions in the ELM-FBPINN. Specifically, we assume that the solution has the following ansatz
\begin{equation}
\tilde{u}(x_1,x_2,\ba) = \tanh(\omega_n x_1)\tanh(\omega_n(1-x_1))\tanh(\omega_n x_2)\tanh(\omega_n(1-x_2))\,\hat{u}(x_1,x_2,\ba),
\end{equation}
with $\hat{u}$ as defined in \Cref{eq:elm_solution}. It is simple to show that this ansatz automatically obeys the boundary condition in \Cref{eq:laplace_problem}. This has the advantage that when training the ELM-FBPINN the boundary losses in \Cref{eq:ELMFBPINN_loss} can be removed. It is important to note that when using this ansatz \Cref{eq:ELMFBPINN_loss} remains quadratic in $\ba$, which still ensures we can form the least-squares system defined in \Cref{eq:ELMFBPINN_ls}.
}

\subsubsection{Time-varying wave equation in (2+1)D}\label{sec:wave_equation_problem}

The final problem studied is the time-varying two dimensional acoustic wave equation with Dirichlet boundary conditions, given by
\begin{equation}\label{eq:wave_equation_problem}
\begin{split}
\nabla^{2}u - \frac{1}{c^2}\frac{\partial^2 u}{\partial t^2}  &= 0 \quad \text{in } \Omega = [-1,1]^2 \times [0,1], \\
u & = 0 \quad \text{on } \partial\Omega_{\bx} \times [0,1], \\
u(\bx,t=0) &= e^{-\frac{1}{2}(\|\bx-\bmu\|/\lambda)^2},\\
\frac{\partial u}{\partial t} (\bx,t=0) &= 0,
\end{split}
\end{equation}
where $c \in \mathbb{R}^{1}$ is a constant wave speed, and $\bmu \in \mathbb{R}^{2}$ and $\lambda \in \mathbb{R}^{1}$ control the starting location and central frequency of a stationary Gaussian point source. For all problems studied we set $c=1$ and $\bmu=(0,0)$. 

With this setup the solution is a circular wavefront expanding from the center of the domain and reflecting from the boundaries. We are not aware of an exact solution, and therefore compare our results to a high-fidelity finite difference acoustic wave equation solver, described in detail in \ref{sec:appendix_fd}. When performing weak scaling studies we increase the solution complexity by decreasing $\lambda$ which increases the solution frequency.

\paragraph{Hard boundary constraints}{Similar to the multi-scale Laplacian problem in \Cref{sec:laplace_problem}, we use a hard boundary constraint when solving this problem, assuming the solution has the following ansatz
\begin{align}
\tilde{u}(x_1, x_2, t,\ba) &= \tanh(x_1 / d)\tanh((1-x_1)/ d)\tanh( x_2/ d)\tanh((1-x_2)/ d) \\
& \quad \times \left(  e^{-\frac{1}{2}\left((0.6\,t/\tau)^2 + \|\bx-\bmu\|/\lambda\right)^2} +  \tanh^2(t/\tau)\hat{u}(x_1, x_2, t,\ba) \right),
\end{align}
where $d=\lambda/2.5$ and $\tau=\lambda/(2.5\,c)$. This ansatz is designed so that all boundary conditions in \Cref{eq:wave_equation_problem} are automatically obeyed. It is also designed such that at times $t \gg \tau$, the initial Gaussian point source term is negligibly zero. This is done so that the ELM-FBPINN does not need to correct for this part of the ansatz once the point source has expanded away from its starting location.
}

\subsection{Common implementation details}
\label{sec:common-implementation}

The \modelss{} tested share many common implementation details across the problems studied which are described below.

\paragraph{Domain decomposition}{
All problems are defined on rectangular domains such that $\Omega=\prod_{i=1}^d [a_i,b_i]$, where $a_i$ and $b_i$ are the edges of the domain along each input dimension. For simplicity, all models tested use a regular overlapping rectangular domain decomposition, defined as $D=\{\Omega_{\bj}\}$ with $ \bj \in \{1,\ldots,S\}^d$, where $d$ is the number of input dimensions, $S$ is the number of subdomains along each input dimension, $J=S^d$ is the total number of subdomains, and
\begin{equation}\label{eq:dd}
\Omega_{\bj} =
\prod_{i=1}^d \Omega_{j_i},
\quad \text{where} \quad \Omega_{j_i} =
\left[
a_i + (b_i-a_i)\frac{(j_i-1)-\delta/2}{S-1},
a_i + (b_i-a_i)\frac{(j_i-1)+\delta/2}{S-1}
\right],
\end{equation}
where $\delta$ is defined as the \textit{overlap ratio}. Note that an overlap ratio of less than 1 means that the subdomains are no longer overlapping. Example domain decompositions used in 1D and 2D are shown in \Cref{fig:harmonic-oscillator-solution-basis} and \Cref{fig:laplace-solution-condition}.

As discussed in \Cref{sec:ELM-FBPINN-background}, window functions with local support are used to confine each ELM-FBPINN subdomain network to its subdomain. All models tested use the following window function for each subdomain
\begin{equation} \label{eq:used_window_function_nd}
\omega_{\bj} = \frac{\hat \omega_{\bj}}{\sum_{\bj} \hat\omega_{\bj}},
\quad \text{where} \quad \hat \omega_{\bj}(\mathbf{x}) = \prod_{i=1}^d [1+\cos(\pi(x_i-\mu_{j_i}/\sigma_{j_i})]^2,
\end{equation}
such that the window functions form a partition of unity, where $\mu_{j_i}$ and $\sigma_{j_i}$ are the center and half-width of each subdomain along each input dimension.

}
\paragraph{Network architecture}{
All models tested use a fully-connected neural network for each subdomain network, with the same size network in each subdomain. All hidden subdomain network parameters are randomly initialized using uniform LeCun initialization. Furthermore, the $\bx$ inputs to each subdomain network are normalized to the range $[-1,1]$ along each dimension over their individual subdomains.

}
\paragraph{Interior collocation points}{
For all models tested, we choose the number of interior collocation points for the PDE residual term in \Cref{eq:ELMFBPINN_loss} such that the global matrix $\mat{M}$ is always well-determined (square or otherwise tall). Specifically, we sample interior collocation points on a regular grid over the entire domain $\Omega$ with $S \cdot \text{ceil}(K^{\frac{1}{d}})$ collocation points along each input dimension. The total number of interior collocation points (which each contribute a row in $\mat{M}$) is therefore $\left[S \cdot \text{ceil}(K^{\frac{1}{d}})\right]^d \ge S^d K = JK$, where we note that $JK$ is the total number of columns in the global $\mat{M}$ matrix.

}
\paragraph{Least-squares solver}{
All models tested use \Cref{alg:RRQR_solver} with the iterative LSQR solver \cite{Paige1982} to solve the global least-squares problem (\Cref{eq:ELMFBPINN_ls}). This solver operates directly on the least-squares problem and avoids potential pitfalls associated with normal-equation-based solution approaches. We purposefully run the solver over 10,000 iterations without using a stopping condition, so that the full convergence behavior of each model can be displayed. In practice, the majority of models tested converge with much less iterations and a stopping condition can be used to significantly accelerate training times. When reporting the final error of a solve, we report the minimum error observed over all iteration steps.

}
\paragraph{Performance evaluation}{
All models are evaluated using the normalized L1 test accuracy, given by $e_{L_1}(\ba)=\frac{1}{M}\sum_i^M \| \hat{u}(\bx_i,\ba)-u(\mathbf{x}_i) \| / \sigma$, where $M$ is the number of test points and $\sigma$ is the standard deviation of the set of true solutions $\{u(\mathbf{x}_i)\}^M_i$. Test points are sampled on a regular grid over the entire domain $\Omega$. For robustness, all models are trained five times using different random seeds. Test accuracies are reported as the median value \(\pm\) the range (max -- min), while training times are reported as the mean \(\pm\) one standard deviation.

}
\paragraph{Software and hardware implementation}{
We implement \modelss{} by building upon the highly optimized FBPINN JAX library available here: \href{https://github.com/benmoseley/FBPINNs}{github.com/benmoseley/FBPINNs}, described in more detail in \cite{Moseley2023, Dolean:MDD:2024, jax2018github}. We define sparse linear solvers using the SciPy library \cite{2020SciPy-NMeth}, and all models are trained on a single AMD EPYC 7742 CPU core with 50~GB RAM.

}

\subsection{Baselines}\label{sec:baselines}

\newcommand{\modelsNoPre}{ELM-FBPINNs without preconditioning}
\newcommand{\modelLSQR}{ELM-FBPINN without preconditioning with LSQR}
\newcommand{\modelAS}{ELM-FBPINN with AS preconditioning}

We compare our \modelss{} to the following baselines:
\begin{enumerate}
  \item PINNs \cite{Lagaris1998, Raissi2019}, optimizing all parameters of a single global neural network to minimize \Cref{eq:ELMFBPINN_loss} with gradient descent using the Adam optimizer and a learning rate of \SI{1e-3}{},
  \item FBPINNs \cite{Moseley2023}, optimizing all subdomain network parameters in \Cref{eq:elm_solution} (rather than freezing hidden parameters) to minimize \Cref{eq:ELMFBPINN_loss} with gradient descent using the Adam optimizer and a learning rate of \SI{1e-3}{},
  \item ELM-FBPINNs without preconditioning, solving the global least-squares problem (\Cref{eq:ELMFBPINN_ls}) directly with sparse LSQR,
  \item ELM-FBPINNs without preconditioning, solving the global least-squares problem (\Cref{eq:ELMFBPINN_ls}) via its normal equations with the conjugate gradient (CG) method,
  \item ELM-FBPINNs with additive Schwarz (AS) preconditioning proposed by \cite{Shang_Heinlein_Mishra_Wang_2025}, solving the global least-squares problem (\Cref{eq:ELMFBPINN_ls}) via its normal equations with AS preconditioning without PCA filtering and the CG method.
\end{enumerate}
For fairness, when comparing these models to \modelss{}, all relevant hyperparameters are kept the same, including problem definition, domain decomposition, subdomain network architecture, parameter initialization strategy, random seed, collocation point definition, loss function, and testing point definition. For the PINN and FBPINN baselines we optimize all network parameters. This turns \Cref{eq:ELMFBPINN_loss} into a nonlinear optimization problem and we therefore use gradient descent with 10,000 steps to optimize them. Furthermore, given PINNs do not use a domain decomposition, a larger global neural network with more hidden layers and hidden units is used for these models, and their inputs are normalized to the range $[-1,1]$ along each dimension over the global domain.

\subsection{Results}
\label{sec:results}


In this section we discuss the performance of \modelss{} on the problems described in \Cref{sec:problems-studied}.

\subsubsection{Harmonic oscillator in 1D}\label{sec:harmonic_oscilator_results}

\paragraph{Baseline model}{\label{sec:harmonic_oscillator_baseline} We first study the harmonic oscillator problem with $\omega_0=60$. We train a \model{} with $J=S=20$, $\delta=2.9$, $K=8$, a subdomain network depth of $h=1$, and $\tanh$ activation function, using $\sigma=\SI{1e-8}{}$ during $\sigma$-RRQR filtering, and test the model with $50\times S=1000$ collocation points. The true solution, domain decomposition, subdomain network basis functions and model solution are shown in \Cref{fig:harmonic-oscillator-solution-basis}. We find that the model is able to accurately predict the exact solution, with a normalized L1 test accuracy of $1.0\pm\SI{1.2e-04}{}$. During $\sigma$-RRQR filtering, we find that 8\% of columns in $\mat{M}$ are discarded, and the condition number of $\mat{M}$ decreases from \SI{2.9e+16}{} to \SI{3.0e+11}{}. The condition number of the preconditioned matrix $\widehat{\mat{M}} \mat{S}^{-1}$ further drops to \SI{5.2e+05}{}. This shows that both our filtering and preconditioning steps significantly improve the conditioning of the least-squares problem. The global matrix $\mat{M}$ and preconditioned matrix $\widehat{\mat{M}} \mat{S}^{-1}$, and their SVD spectra, are plotted in \Cref{fig:harmonic-oscillator-condition}, which shows the expected block-sparsity pattern of these matrices, and a reduction in dynamic range of their SVD spectra after preconditioning. The convergence of the LSQR solver is shown in \Cref{fig:harmonic-oscillator-convergence}. We find that the model converges within 1000 iterations, whilst the total time taken to carry out the full 10,000 iterations is $4.0\pm\SI{0.1}{s}$.
}

\begin{figure}[!t]
\centering
\includegraphics[width=\textwidth]{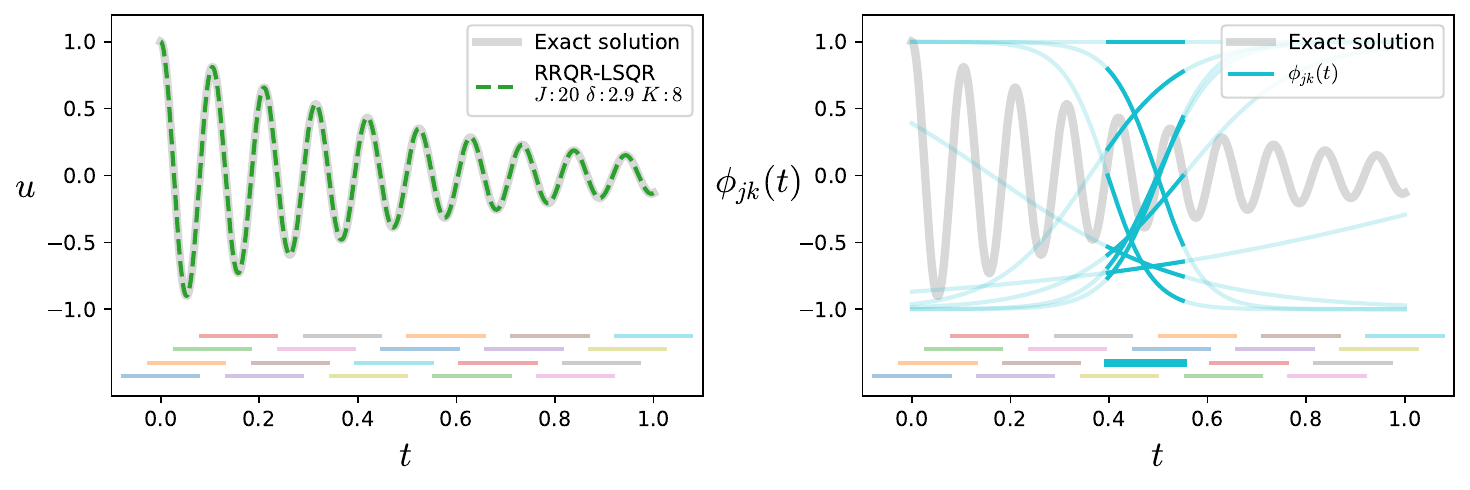}
\caption{Exact solution (left), domain decomposition (both), model basis functions for the 10\textsuperscript{th} subdomain (right), and model solution (left) when using the \model{} to solve the harmonic oscillator in 1D.
\label{fig:harmonic-oscillator-solution-basis}}
\end{figure}

\begin{figure}[!t]
\centering
\includegraphics[width=\textwidth]{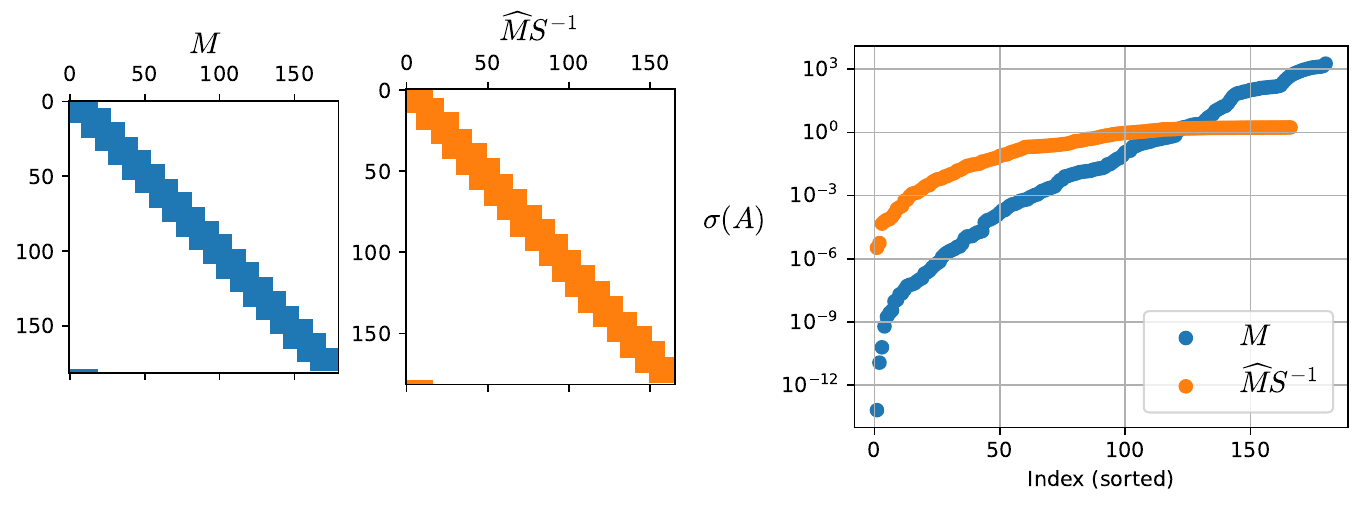}
\caption{Sparsity plot of $\mat{M}$ and $\widehat{\mat{M}} \mat{S}^{-1}$ (left) and their SVD spectra (right), when using the \model{} to solve the harmonic oscillator in 1D.
\label{fig:harmonic-oscillator-condition}}
\end{figure}

\paragraph{Baseline comparison}{Next, we compare our baseline \model{} above to our baseline models. A comparison of the relevant condition numbers, test accuracy and total training time is shown in \Cref{tab:harmonic-oscillator}. Our model has the highest test accuracy, with the \modelAS{} and \modelsNoPre{} achieving a slightly lower test accuracy. The FBPINN and PINN baselines perform significantly worse than the models with linear solvers, being one and three orders of magnitude less accurate respectively, possibly because of the non-convexity of their loss function. All models with linear solvers have similar training times, whilst the FBPINN and PINN baselines are and nearly an order of magnitude slower to train due to the higher computational cost of gradient descent. The convergence curves of all linear solver models are compared in \Cref{fig:harmonic-oscillator-convergence}. We find that the \modelsNoPre{} take the full 10,000 iteration steps to converge, whilst both our model and the \modelAS{} converge within 1000 steps. This shows that preconditioning is essential for enabling the solver to converge quickly. We notice that whilst the \modelAS{} converges quickly, it quickly becomes unstable at later iteration steps. Thus, it appears in practice a good stopping condition is required to use this model.
}

\begin{table}[h]
\setlength{\tabcolsep}{2pt}
\centering
\resizebox{\textwidth}{!}{
\begin{tabular}{ccccccccccc}
\toprule
Network & $(h,K)$ & $\kappa(M)$ & $\kappa(\widehat{M})$ & $\kappa(\widehat{M} S^{-1})$ & $\kappa(M^T M)$ & $\kappa(A^{-1}_{AS} A)$ & Optimiser & $\sigma$ ($\phi_{jk}$ drop \%) & $e_{L_1}$ & Time (s) \\
\midrule
FCN & (2, 64) & N/A & N/A & N/A & N/A & N/A & PINN-Adam & N/A (N/A) & 4.8$\pm$0.6e-01 & 1.4$\pm$0.0e+01 \\
FCN & (1, 8) & N/A & N/A & N/A & N/A & N/A & FBPINN-Adam & N/A (N/A) & 3.5$\pm$1.5e-03 & 8.9$\pm$0.1 \\
ELM & (1, 8) & 2.9e+16 & N/A & N/A & N/A & N/A & LSQR & N/A (N/A) & 2.9$\pm$1.8e-04 & 3.4$\pm$0.0 \\
ELM & (1, 8) & 2.9e+16 & N/A & N/A & 9.9e+18 & N/A & CG & N/A (N/A) & 6.0$\pm$9.2e-04 & 2.9$\pm$0.0 \\
ELM & (1, 8) & 2.9e+16 & N/A & N/A & 9.9e+18 & 3.2e+12 & AS-CG & N/A (N/A) & 2.7$\pm$7.5e-04 & 3.6$\pm$0.0 \\
ELM & (1, 8) & 2.9e+16 & 3.0e+11 & 5.2e+05 & N/A & N/A & RRQR-LSQR & 1e-08 (8\%) & 1.0$\pm$1.2e-04 & 4.0$\pm$0.1 \\
ELM & (1, 8) & 2.9e+16 & 2.8e+12 & 3.7e+06 & N/A & N/A & RRQR-LSQR & 1e-10 (2\%) & 5.9$\pm$9.0e-05 & 4.1$\pm$0.1 \\
ELM & (1, 8) & 2.9e+16 & 1.6e+08 & 1.4e+04 & N/A & N/A & RRQR-LSQR & 1e-06 (21\%) & 1.1$\pm$3.1e-04 & 4.1$\pm$0.1 \\
ELM & (1, 8) & 2.9e+16 & 2.7e+05 & 2.4e+02 & N/A & N/A & RRQR-LSQR & 1e-04 (43\%) & 1.0$\pm$0.7e-03 & 4.0$\pm$0.1 \\
ELM & (1, 8) & 2.9e+16 & 2.6e+03 & 5.9e+01 & N/A & N/A & RRQR-LSQR & 1e-02 (67\%) & 1.3$\pm$1.1e-02 & 4.2$\pm$0.1 \\
ELM-$\sigma$ & (1, 8) & 4.9e+17 & 1.8e+10 & 5.0e+03 & N/A & N/A & RRQR-LSQR & 1e-08 (22\%) & 2.0$\pm$2.3e-04 & 4.2$\pm$0.3 \\
ELM-$\sin$ & (1, 8) & 6.4e+16 & 1.1e+10 & 3.2e+03 & N/A & N/A & RRQR-LSQR & 1e-08 (23\%) & 2.0$\pm$2.1e-04 & 4.1$\pm$0.1 \\
ELM & (3, 8) & 2.4e+15 & 4.5e+11 & 6.5e+05 & N/A & N/A & RRQR-LSQR & 1e-08 (3\%) & 4.9$\pm$9.4e-05 & 4.7$\pm$0.1 \\
ELM & (5, 8) & 7.6e+15 & 2.6e+11 & 1.7e+05 & N/A & N/A & RRQR-LSQR & 1e-08 (12\%) & 5.7$\pm$7.9e-05 & 5.3$\pm$0.1 \\
\bottomrule
\end{tabular}

}
\caption{Comparison of test error, total training time, and conditioning numbers for different models solving the harmonic oscillator in 1D.}
\label{tab:harmonic-oscillator}
\end{table}

\begin{figure}[!t]
\centering
\includegraphics[width=0.75\textwidth]{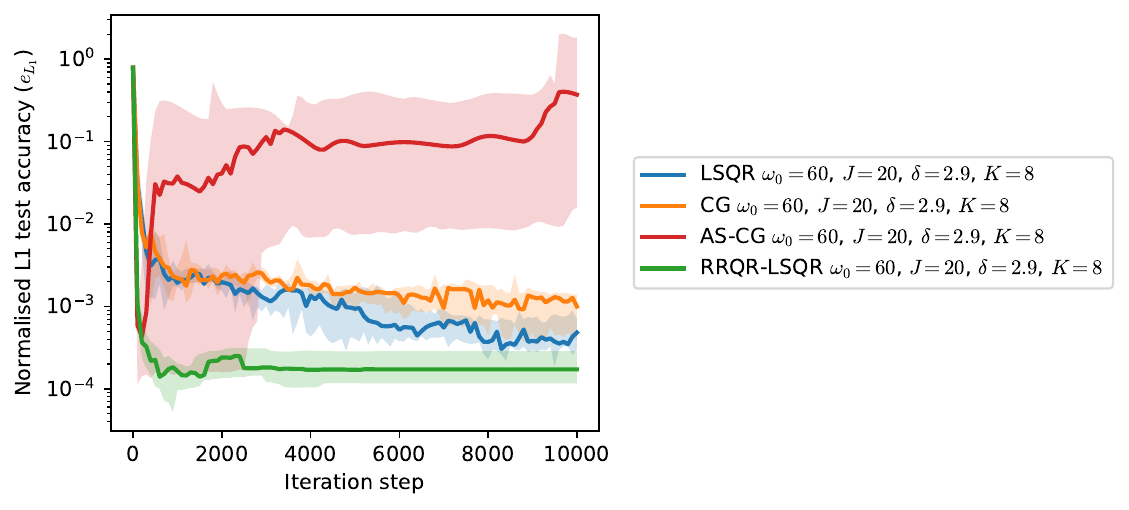}
\caption{Convergence curves of the \model{} and our baseline models when solving the harmonic oscillator in 1D.
\label{fig:harmonic-oscillator-convergence}}
\end{figure}

\paragraph{Ablation tests}{
Next, we perform ablation tests on our baseline \model{}. Specifically, we test its performance whilst varying $\sigma$ between $\SI{1e-10}{},\, \SI{1e-6}{},\, \SI{1e-4}{}, \text{ and } \SI{1e-2}{}$, using sigmoid and sinusoidal activation functions, and with a subdomain network depth of $h=3 \text{ and } 5$. The test accuracy and total training time of these models is reported in \Cref{tab:harmonic-oscillator}. As expected, the number of columns dropped by $\sigma$-RRQR filtering increases as $\sigma$ increases, with 67\% of columns dropped for $\sigma=\SI{1e-2}{}$. Also as expected, the condition numbers of $\widehat{\mat{M}}$ and $\widehat{\mat{M}} \mat{S}^{-1}$ decrease as $\sigma$ increases, as more numerically dependent columns are dropped during filtering. However, the test accuracy decreases as $\sigma$ increases, possibly because the model has less representational capacity as columns (basis functions) are dropped. Thus it appears important to balance good conditioning of the system against test time accuracy. We find that using sigmoid and sinusoidal activation functions does not improve test accuracy, whilst increasing $h$ does improve test accuracy and reduces the condition number of $\mat{M}$, possibly because increasing the depth of the subdomain networks allows the basis functions to be more flexible and therefore less linearly dependent.
}

\paragraph{Strong scaling tests}{
Next we perform strong scaling tests, where the general principle is to test \textit{fixing the problem complexity and increasing the model capacity}. In the ideal case, the model will improve its test accuracy in proportion with model capacity. For this problem, we fix the problem complexity by keeping $\omega_0=60$, as tested above. We then increase the model capacity in two different ways; first, by varying $K$ between $4,8,16,32,\text{ and }64$, and second by fixing $K=8$, but simultaneously varying $S$ between $10, 20, 40, 80,\text{ and }160$ and $\delta$ between $1.45, 2.9, 5.8, 11.6,\text{ and }23.2$. Scaling $S$ and $\delta$ in this way keeps the subdomain width fixed at $0.153$ for all tests, but adds more subdomains with higher overlap. All other hyperparameters are kept the same as our baseline model above. Plots of the convergence curves and test accuracy versus total training time of the \model{}, \modelLSQR{}, and \modelAS{} for each scaling test are shown in \Cref{fig:harmonic-oscillator-strong-p-scaling} and \Cref{fig:harmonic-oscillator-strong-wm-scaling}. For all solvers, we find that the test accuracy improves when increasing $K$, showing good scaling behavior. However, the \modelLSQR{} and \modelAS{} have significantly worse test accuracy than our model for large values of $K$. This is likely because the condition number of $\mat{M}$ increases significantly with $K$, and our model is the only model which filters basis functions (i.e. lowers $K$) before solving the system. Furthermore, our model is an order of magnitude faster than than the \modelLSQR{}, and two orders of magnitude faster than the \modelAS{}. This is both because our model drops columns before solving the system, which reduces the size of $\mat{M}$, and because computation of the $A^{-1}_{AS}$ preconditioner is significantly more expensive in memory and time than our $\widehat{\mat{M}} \mat{S}^{-1}$ preconditioner. For all solvers, we find that the test accuracy improves when simultaneously increasing $S$ and $\delta$, where in this case the \modelAS{} achieves a slightly higher test accuracy than our model for the final $S=160, \delta=23.2$ case, but does takes significantly longer to do so.
}

\begin{figure}[!t]
\centering
\includegraphics[width=\textwidth]{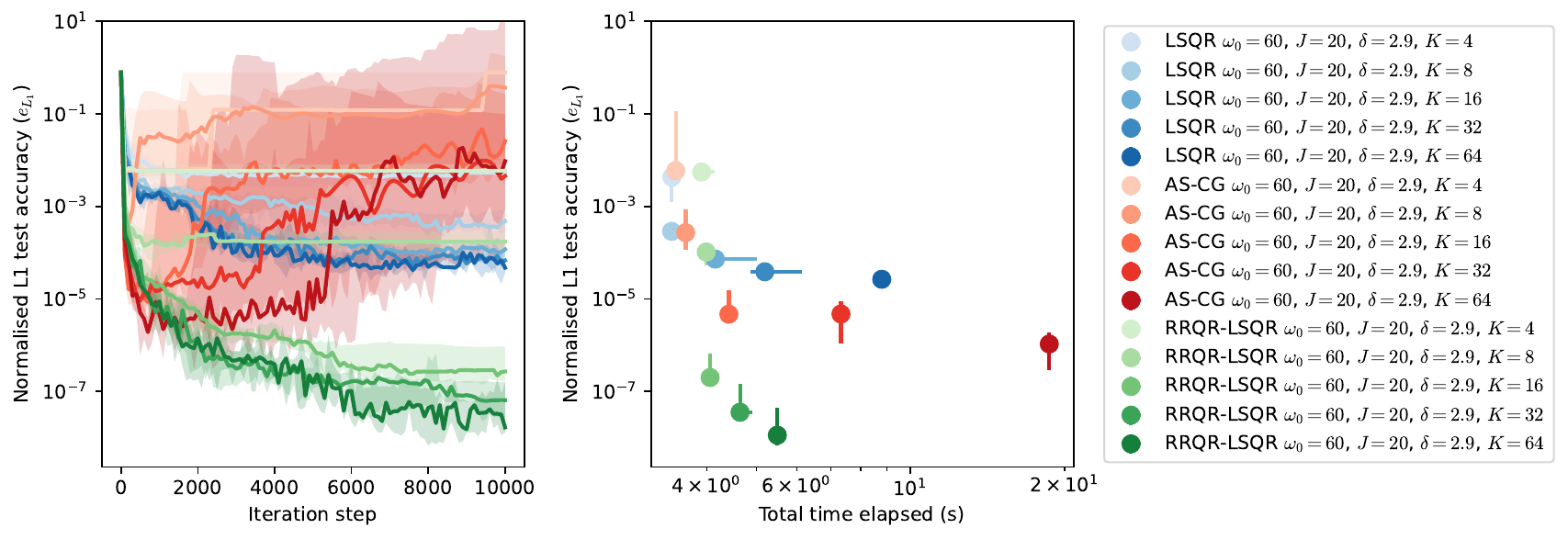}
\caption{Strong scaling test for the harmonic oscillator in 1D, fixing the solution frequency, $\omega_0$, and varying the number of basis functions, $K$.
\label{fig:harmonic-oscillator-strong-p-scaling}}
\end{figure}

\begin{figure}[!t]
\centering
\includegraphics[width=\textwidth]{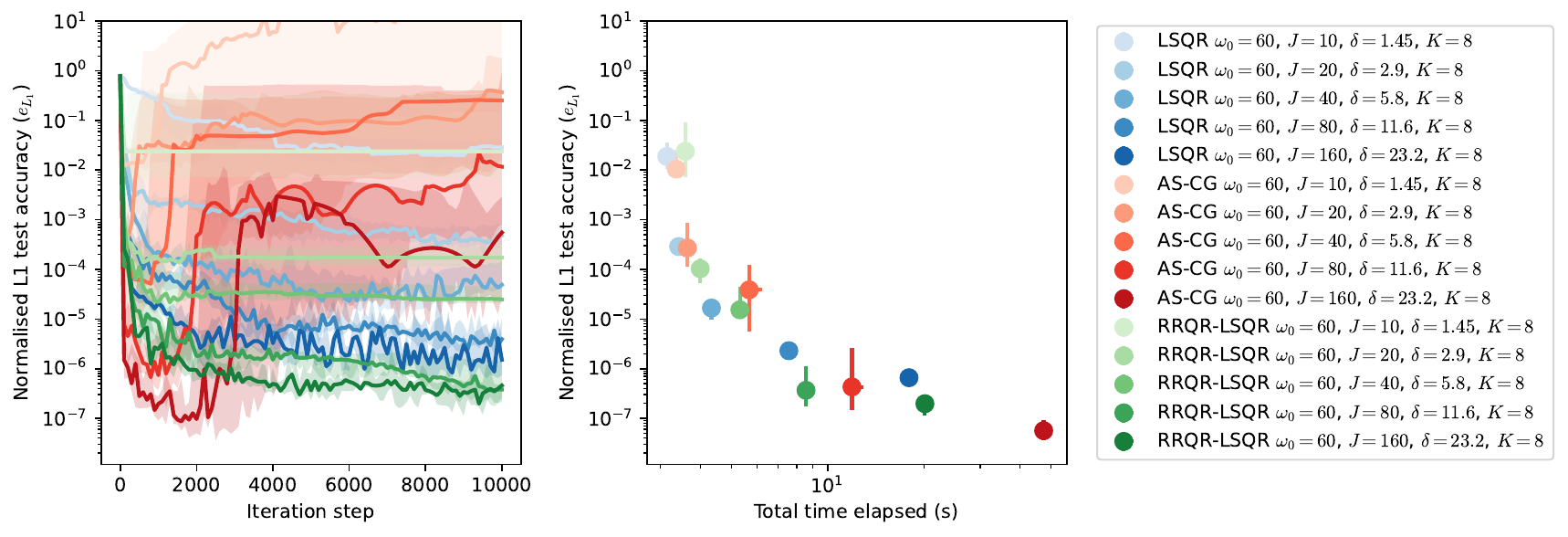}
\caption{Strong scaling test for the harmonic oscillator in 1D, fixing the solution frequency, $\omega_0$, and simultaneously varying the number of subdomains, $J$, and the overlap between subdomains, $\delta$.
\label{fig:harmonic-oscillator-strong-wm-scaling}}
\end{figure}

\paragraph{Weak scaling test}{Related to strong scaling tests, we also perform weak scaling tests, where the general principle is to test \textit{increasing the problem complexity whilst proportionally increasing the model capacity}. In the ideal case, test accuracy will stay constant as the problem complexity and model capacity is increased. For this problem, we increase the problem complexity by varying $\omega_0$ between $30, 60, 120, 240, \text{ and }580$, whilst simultaneously increasing $S$ between $10, 20, 40, 80,\text{ and }160$. All other hyperparameters are kept the same as our baseline model above. Plots of the convergence curves and test accuracy versus total training time of the \model{}, \modelLSQR{}, and \modelAS{} for each case are shown in \Cref{fig:harmonic-oscillator-weak-scaling}, and the exact solution and \model{} solution are plotted for different $\omega_0$ values in \Cref{fig:harmonic-oscillator-weak-solution}. We find that our model and the \modelAS{} have good scaling behavior, with test accuracy roughly staying constant as $\omega_0$ increases, whilst the \modelLSQR{} has poor scaling behavior, being unable to accurately model the $\omega_0=240 \text{ and }580$ solutions with the solver converging extremely slowly. This presents further evidence that preconditioning is essential especially for larger numbers of subdomains.
}

\begin{figure}[!t]
\centering
\includegraphics[width=\textwidth]{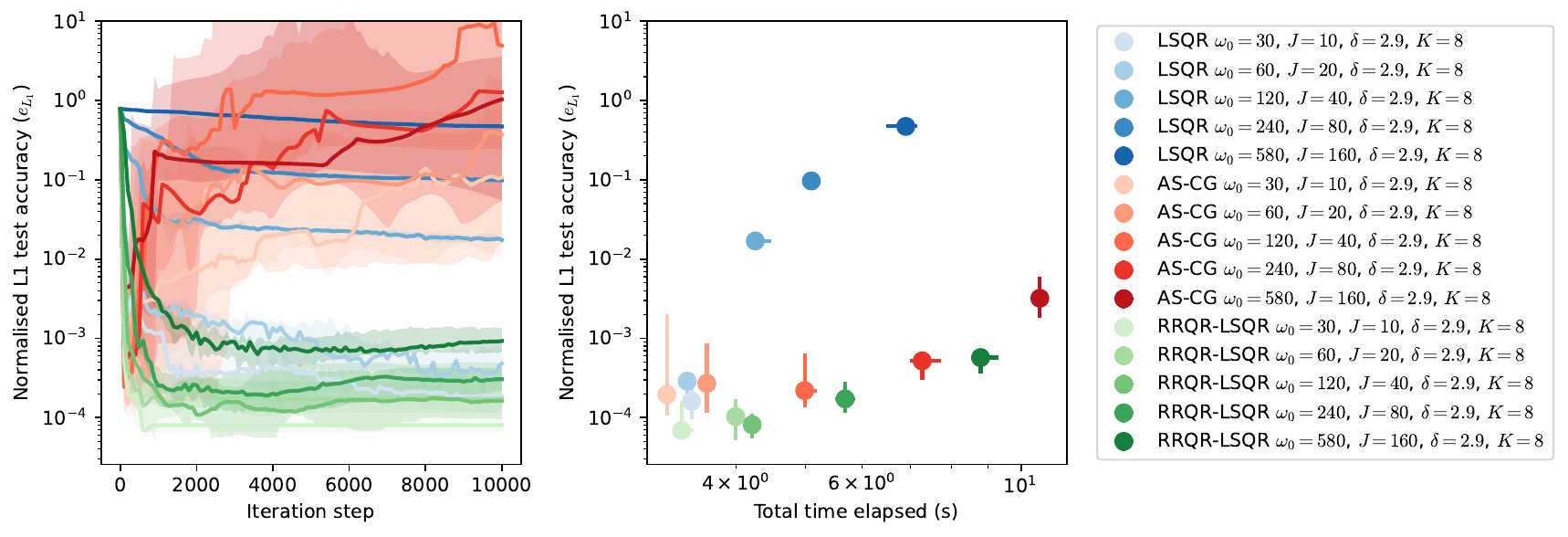}
\caption{Weak scaling test for the harmonic oscillator in 1D, increasing the solution frequency, $\omega_0$, whilst simultaneously increasing the number of subdomains, $J$.
\label{fig:harmonic-oscillator-weak-scaling}}
\end{figure}

\begin{figure}[!t]
\centering
\includegraphics[width=\textwidth]{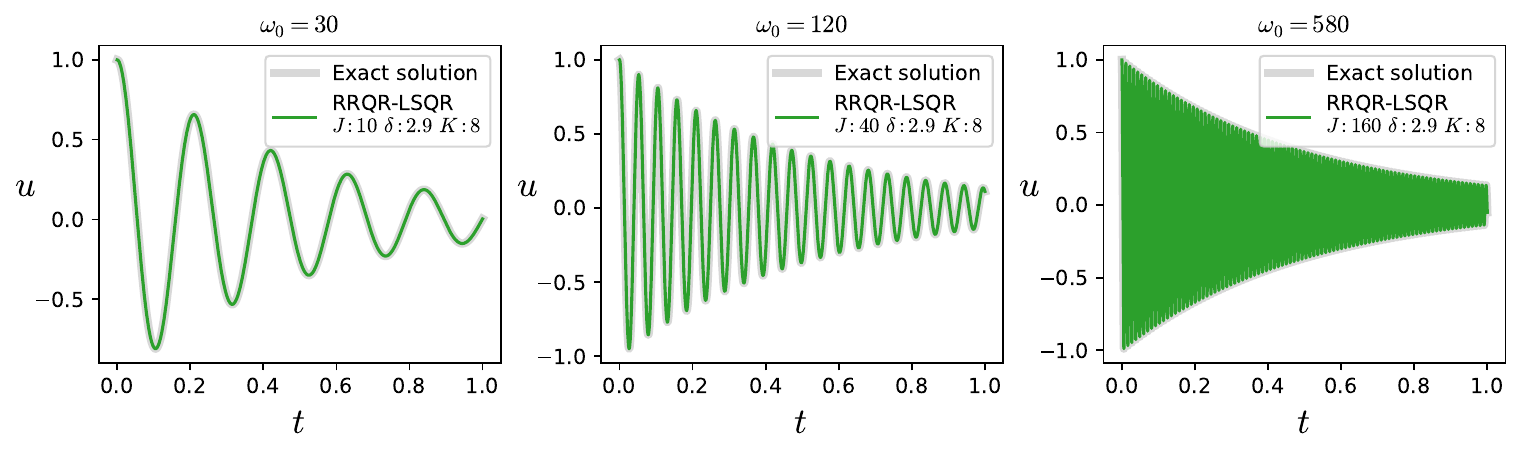}
\caption{Comparison of exact solution and \model{} solution for three of the cases in the weak scaling test for the harmonic oscillator in 1D.
\label{fig:harmonic-oscillator-weak-solution}}
\end{figure}

\subsubsection{Multi-scale Laplacian in 2D}\label{sec:laplace_results}

\paragraph{Baseline model}{Next, we move on to studying the multi-scale Laplacian problem in 2D. As above, we first train a baseline \model{}, studying the problem with $n=3$, using $S=16$, $\delta=2.9$, $K=16$, a subdomain network depth of $h=1$, $\tanh$ activation function, $\sigma=\SI{1e-8}{}$, and $6\times S=96$ collocation points along each input dimension for testing. The true solution, domain decomposition, and model solution are shown in \Cref{fig:laplace-solution-condition}. Similar to the 1D harmonic oscillator problem, we find that the model is able to accurately predict the exact solution, with a normalized L1 test accuracy of $1.9\pm\SI{1.3e-03}{}$. Interesting, we find that no columns are dropped during $\sigma$-RRQR filtering, likely because the condition number of $\mat{M}$ is already relatively low at \SI{1.4e+9}{}. After preconditioning the condition number of $\widehat{\mat{M}} \mat{S}^{-1}$ further drops to \SI{2.8e+04}{}, and the SVD spectra of $\mat{M}$ and $\widehat{\mat{M}} \mat{S}^{-1}$ are plotted in \Cref{fig:laplace-solution-condition}. The convergence of the LSQR solver is shown in \Cref{fig:laplace-convergence}. We find that the model converges within a few hundred steps, whilst the total time taken to carry out the full 10,000 iterations is $3.1\pm\SI{0.1e01}{s}$.
}

\begin{figure}[!t]
\centering
\includegraphics[width=0.75\textwidth]{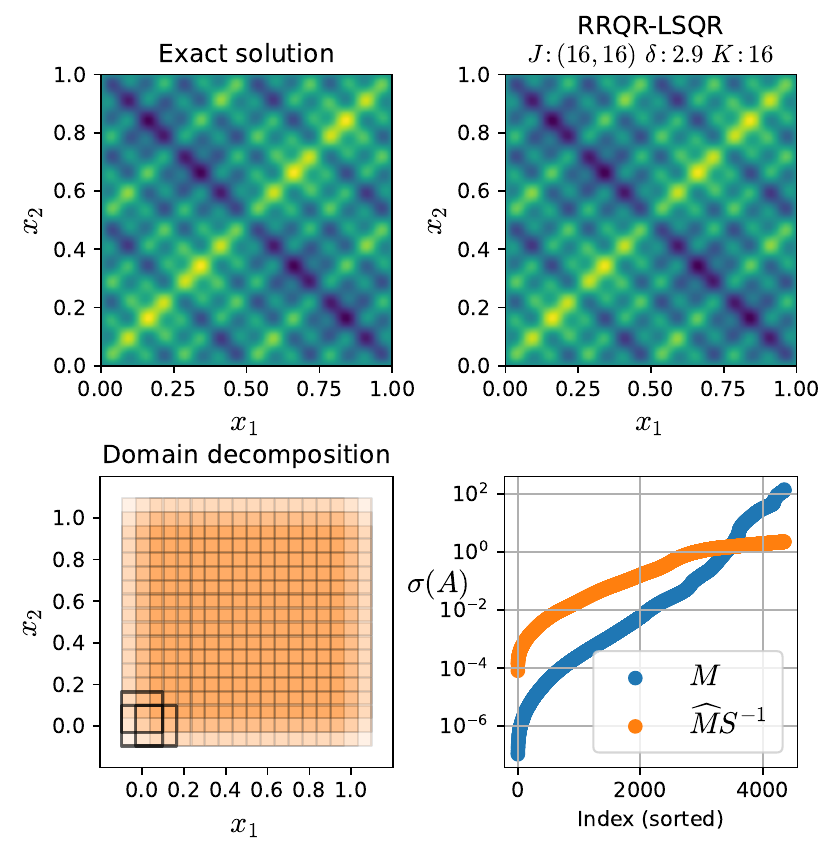}
\caption{Exact solution (top left), domain decomposition (bottom left), model solution (top right), and SVD spectra of $\mat{M}$ and $\widehat{\mat{M}} \mat{S}^{-1}$ (bottom right) when using the \model{} to solve the multi-scale Laplacian in 2D.
\label{fig:laplace-solution-condition}}
\end{figure}

\paragraph{Baseline comparison}{We compare our baseline \model{} above to our baseline models. A comparison of the relevant condition numbers, test accuracy and total training time is shown in \Cref{tab:laplace}. Similar results can be drawn to the harmonic oscillator test above; our model has the highest test accuracy, with the \modelAS{} and \modelsNoPre{} achieving a slightly lower test accuracy with comparable training times, whilst the FBPINN and PINN baselines perform significantly worse than our model in both accuracy and training time. The convergence curves of all linear solver models are compared in \Cref{fig:laplace-convergence}. Similar to the harmonic oscillator test, we find that the \modelsNoPre{} take the full 10,000 iteration steps to converge, whilst both our model and the \modelAS{} converge within a few hundred steps, again showing that preconditioning is essential for enabling the solver to converge quickly. Similar instabilities in the \modelAS{} at later iteration steps are observed.
}

\begin{table}[h]
\setlength{\tabcolsep}{2pt}
\centering
\resizebox{\textwidth}{!}{
\begin{tabular}{ccccccccccc}
\toprule
Network & $(h,K)$ & $\kappa(M)$ & $\kappa(\widehat{M})$ & $\kappa(\widehat{M} S^{-1})$ & $\kappa(M^T M)$ & $\kappa(A^{-1}_{AS} A)$ & Optimiser & $\sigma$ ($\phi_{jk}$ drop \%) & $e_{L_1}$ & Time (s) \\
\midrule
FCN & (2, 128) & N/A & N/A & N/A & N/A & N/A & PINN-Adam & N/A (N/A) & 7.9$\pm$10.8e-02 & 3.2$\pm$0.0e+03 \\
FCN & (1, 16) & N/A & N/A & N/A & N/A & N/A & FBPINN-Adam & N/A (N/A) & 5.0$\pm$2.6e-03 & 1.5$\pm$0.0e+03 \\
ELM & (1, 16) & 1.4e+09 & N/A & N/A & N/A & N/A & LSQR & N/A (N/A) & 3.9$\pm$0.5e-03 & 3.1$\pm$0.1e+01 \\
ELM & (1, 16) & 1.4e+09 & N/A & N/A & 8.3e+18 & N/A & CG & N/A (N/A) & 4.2$\pm$1.1e-03 & 3.0$\pm$0.1e+01 \\
ELM & (1, 16) & 1.4e+09 & N/A & N/A & 8.3e+18 & 4.0e+15 & AS-CG & N/A (N/A) & 1.3$\pm$0.9e-02 & 1.1$\pm$0.0e+02 \\
ELM & (1, 16) & 1.4e+09 & 1.4e+09 & 2.8e+04 & N/A & N/A & RRQR-LSQR & 1e-08 (0\%) & 1.9$\pm$1.3e-03 & 3.1$\pm$0.1e+01 \\
ELM & (1, 16) & 1.4e+09 & 1.4e+09 & 2.8e+04 & N/A & N/A & RRQR-LSQR & 1e-10 (0\%) & 1.9$\pm$1.3e-03 & 3.1$\pm$0.1e+01 \\
ELM & (1, 16) & 1.4e+09 & 9.3e+08 & 2.2e+04 & N/A & N/A & RRQR-LSQR & 1e-06 (0\%) & 1.9$\pm$1.5e-03 & 3.7$\pm$0.1e+01 \\
ELM & (1, 16) & 1.4e+09 & 3.8e+06 & 1.1e+03 & N/A & N/A & RRQR-LSQR & 1e-04 (21\%) & 2.7$\pm$2.2e-03 & 2.8$\pm$0.1e+01 \\
ELM & (1, 16) & 1.4e+09 & 6.8e+03 & 1.1e+02 & N/A & N/A & RRQR-LSQR & 1e-02 (65\%) & 1.3$\pm$0.6e-02 & 1.7$\pm$0.0e+01 \\
ELM-$\sigma$ & (1, 16) & 1.4e+11 & 7.6e+10 & 2.4e+04 & N/A & N/A & RRQR-LSQR & 1e-08 (0\%) & 2.1$\pm$0.9e-03 & 3.5$\pm$0.7e+01 \\
ELM-$\sin$ & (1, 16) & 1.5e+10 & 1.5e+10 & 2.2e+04 & N/A & N/A & RRQR-LSQR & 1e-08 (0\%) & 2.2$\pm$0.6e-03 & 2.8$\pm$0.1e+01 \\
ELM & (3, 16) & 1.1e+09 & 1.1e+09 & 1.7e+04 & N/A & N/A & RRQR-LSQR & 1e-08 (0\%) & 2.8$\pm$2.1e-03 & 4.8$\pm$0.1e+01 \\
ELM & (5, 16) & 1.2e+10 & 1.2e+10 & 1.9e+04 & N/A & N/A & RRQR-LSQR & 1e-08 (0\%) & 2.7$\pm$2.8e-03 & 6.2$\pm$0.1e+01 \\
\bottomrule
\end{tabular}

}
\caption{Comparison of test error, total training time, and conditioning numbers for different models solving the multi-scale Laplacian in 2D.}
\label{tab:laplace}
\end{table}

\begin{figure}[!t]
\centering
\includegraphics[width=0.75\textwidth]{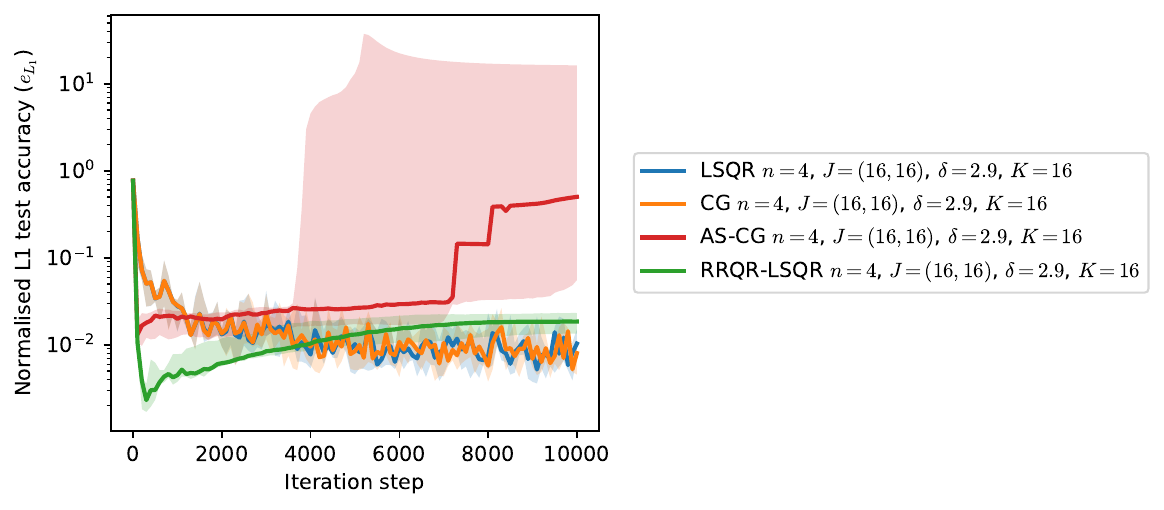}
\caption{Convergence curves of the \model{} and our baseline models when solving the multi-scale Laplacian in 2D.
\label{fig:laplace-convergence}}
\end{figure}

\paragraph{Ablation tests}{
We perform ablation tests on our baseline \model{}. Specifically, we test its performance using the same ablations studied for the harmonic oscillator problem. The test accuracy and total training time of these models is reported in \Cref{tab:laplace}. The same conclusions as the harmonic oscillator ablation studies can be drawn, except that increasing the subdomain network depth does not improve accuracy in this case.
}

\paragraph{Strong scaling tests}{
We perform a strong scaling test in a similar fashion to the harmonic oscillator test, fixing the problem complexity by keeping $n=3$, and increasing the model capacity in two different ways; first, by varying $K$ between $4,8,16,32,\text{ and }64$, and second by fixing $K=16$, but simultaneously varying $S$ between $8, 16, \text{ and }32$ and $\delta$ between $1.45, 2.9, \text{ and }5.8$. Plots of the convergence curves and test accuracy versus total training time of the \model{}, \modelLSQR{}, and \modelAS{} for each scaling test are shown in \Cref{fig:laplace-strong-p-scaling} and \Cref{fig:laplace-strong-wm-scaling}. For all solvers, we find that the test accuracy improves when increasing $K$, showing good scaling behavior, similar to the harmonic oscillator problem. However, for the largest $K$ value, the \modelLSQR{} is an order of magnitude less accurate than the other solvers, suggesting that good preconditioning is necessary with higher $K$ values. The \modelAS{} and our model perform with comparable accuracy, but, similar to the harmonic oscillator problem, the \modelAS{} is approximately an order of magnitude slower to train, due to the expensive computation of the preconditioner. For all solvers, we find that the test accuracy improves when simultaneously increasing $S$ and $\delta$, however the \modelAS{} ran out of memory for the last case, and the \modelLSQR{} performed similarly to our model, likely because the condition number of \mat{M} is relatively low in all cases.
}

\begin{figure}[!t]
\centering
\includegraphics[width=\textwidth]{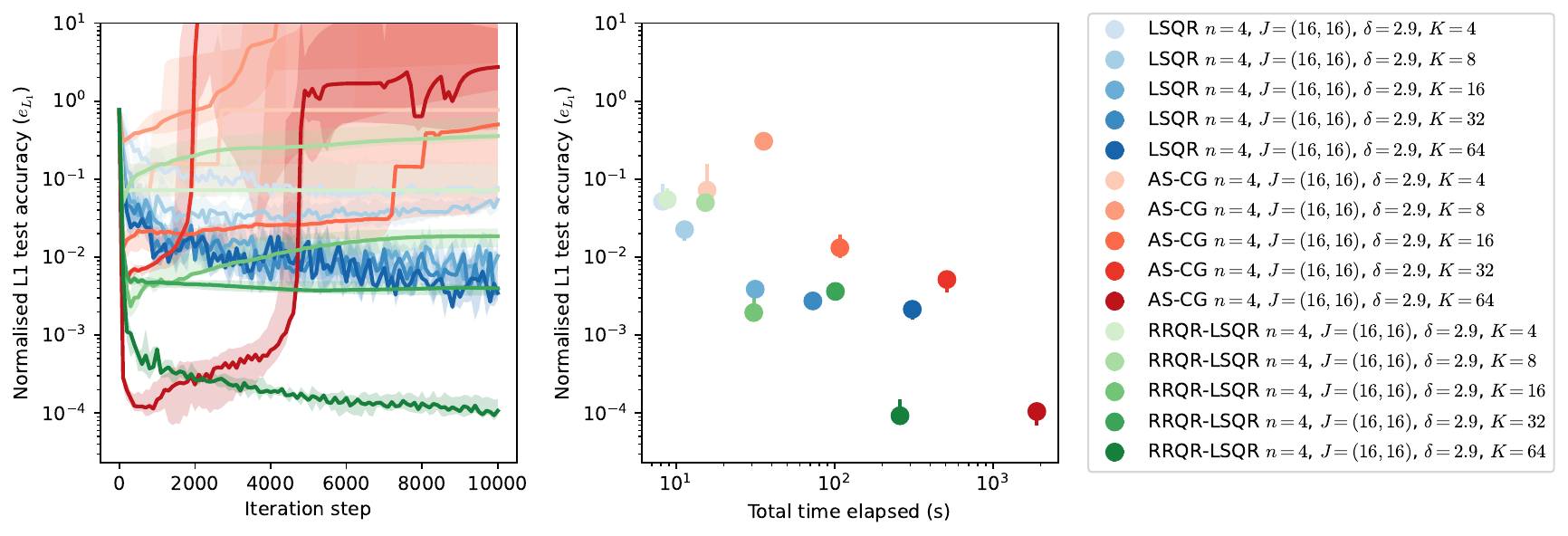}
\caption{Strong scaling test for the multi-scale Laplacian in 2D, fixing the number of solution components, $n$, and varying the number of basis functions, $K$.
\label{fig:laplace-strong-p-scaling}}
\end{figure}

\begin{figure}[!t]
\centering
\includegraphics[width=\textwidth]{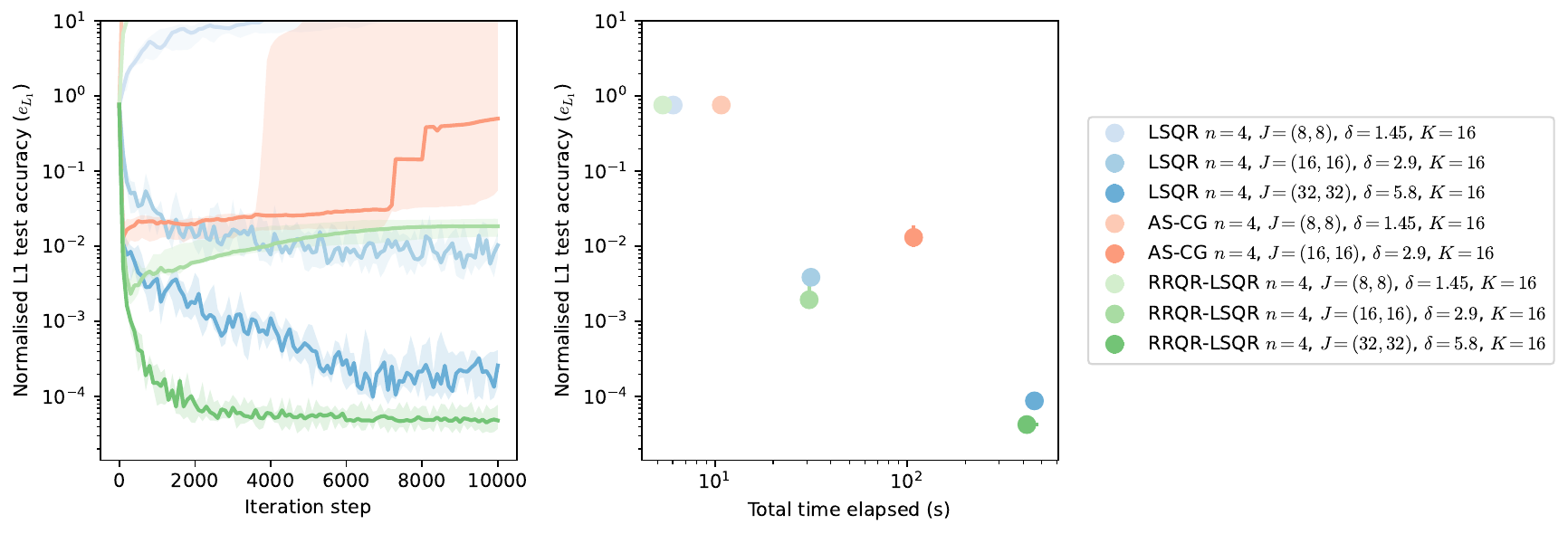}
\caption{Strong scaling test for the multi-scale Laplacian in 2D, fixing the number of solution components, $n$, and simultaneously varying the number of subdomains, $J$, and the overlap between subdomains, $\delta$.
\label{fig:laplace-strong-wm-scaling}}
\end{figure}

\paragraph{Weak scaling test}{We perform a weak scaling test in a similar fashion to the harmonic oscillator test, increasing the problem complexity by varying $n$ between $1, 2, 3, 4, 5,\text{ and }6$, whilst simultaneously increasing $S$ between $2, 4, 8, 16, 32\text{ and }64$. Plots of the convergence curves and test accuracy versus total training time of the \model{}, \modelLSQR{}, and \modelAS{} for each case are shown in \Cref{fig:laplace-weak-scaling}, and the exact solution and \model{} solution are plotted for different $n$ values in \Cref{fig:laplace-weak-solution}. We find that all solvers are able to solve all cases to a reasonable accuracy, but undesirably their accuracy reduces as $n$ increases. The final case has $J=64\times64=4,096$ subdomains; we postulate that the worsening error is because adding subdomains without increasing their overlap makes it challenging to communicate between subdomains, potentially making the model harder to train, as studied by \cite{Dolean:MDD:2024}. Increasing the overlap reduces sparsity in $\mat{M}$, significantly increasing memory and training time, such that we were unable to increase the overlap significantly without running out of memory. In future work, a more promising direction may be to instead aid communication between subdomains by incorporating multiple levels of domain decompositions, as studied by \cite{Dolean:MDD:2024}.

\begin{figure}[!t]
\centering
\includegraphics[width=\textwidth]{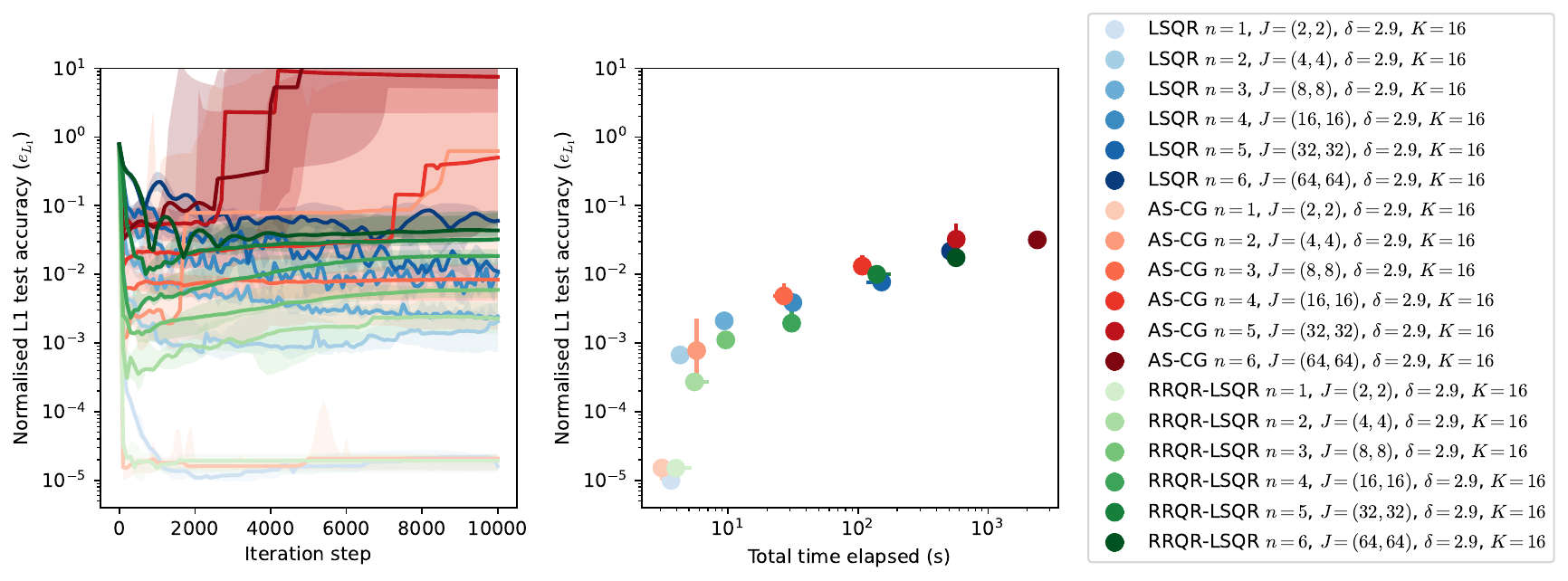}
\caption{Weak scaling test for the multi-scale Laplacian in 2D, increasing the number of solution components, $n$, whilst simultaneously increasing the number of subdomains, $J$.
\label{fig:laplace-weak-scaling}}
\end{figure}

\begin{figure}[!t]
\centering
\includegraphics[width=0.75\textwidth]{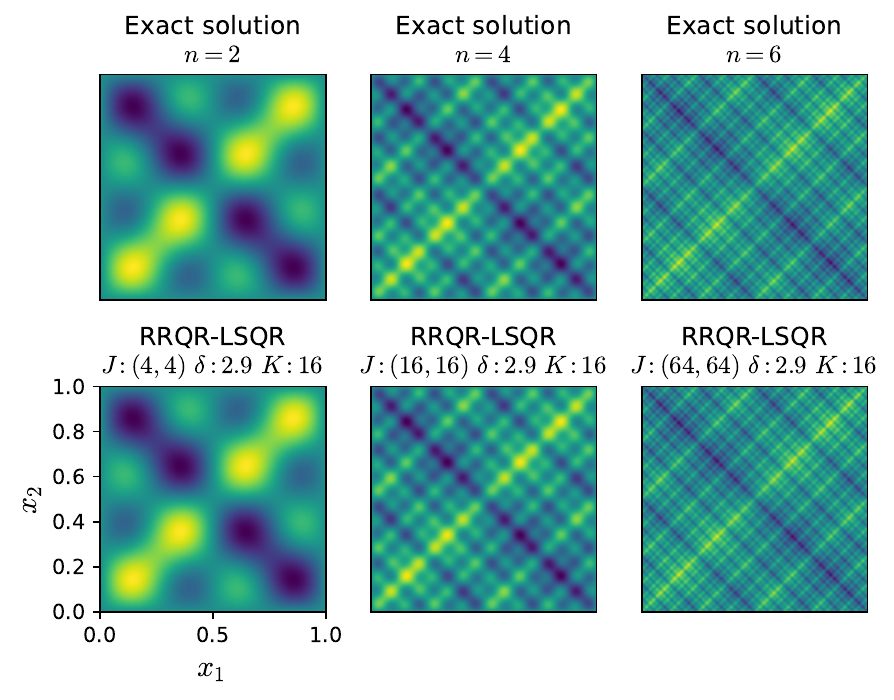}
\caption{Comparison of exact solution and \model{} solution for three of the cases in the weak scaling test for the multi-scale Laplacian in 2D.
\label{fig:laplace-weak-solution}}
\end{figure}

\subsubsection{Time-varying wave equation in (2+1)D}\label{sec:wave_equation_results}

\paragraph{Baseline model}{Finally, we study the (2+1)D wave equation problem. We train a baseline \model{}, studying the problem with $\lambda=0.2$, using $S=8$, $\delta=2.9$, $K=8$, a subdomain network depth of $h=1$, $\tanh$ activation function, $\sigma=\SI{1e-8}{}$, and $4\times S=32$ collocation points along each input dimension for testing. A comparison of the solution from FD simulation and our model solution is shown in \Cref{fig:wave-equation-weak-solution}. We find that the model is able to predict the solution with reasonable accuracy, with a normalized L1 test accuracy of $2.2\pm\SI{0.4e-02}{}$. Similarly to the multi-scale Laplacian problem, we find that no columns are dropped during $\sigma$-RRQR filtering, likely because the condition number of $\mat{M}$ is already relatively low at \SI{5.1e+6}{}. After preconditioning the condition number of $\widehat{\mat{M}} \mat{S}^{-1}$ further drops to \SI{2.4e+02}{}. The convergence of the LSQR solver is shown in \Cref{fig:wave-equation-convergence}. We find that the model converges within a few tens of steps, whilst the total time taken to carry out the full 10,000 iterations is $1.0\pm\SI{0.0e02}{s}$.
}

\begin{figure}[!t]
\centering
\includegraphics[width=\textwidth]{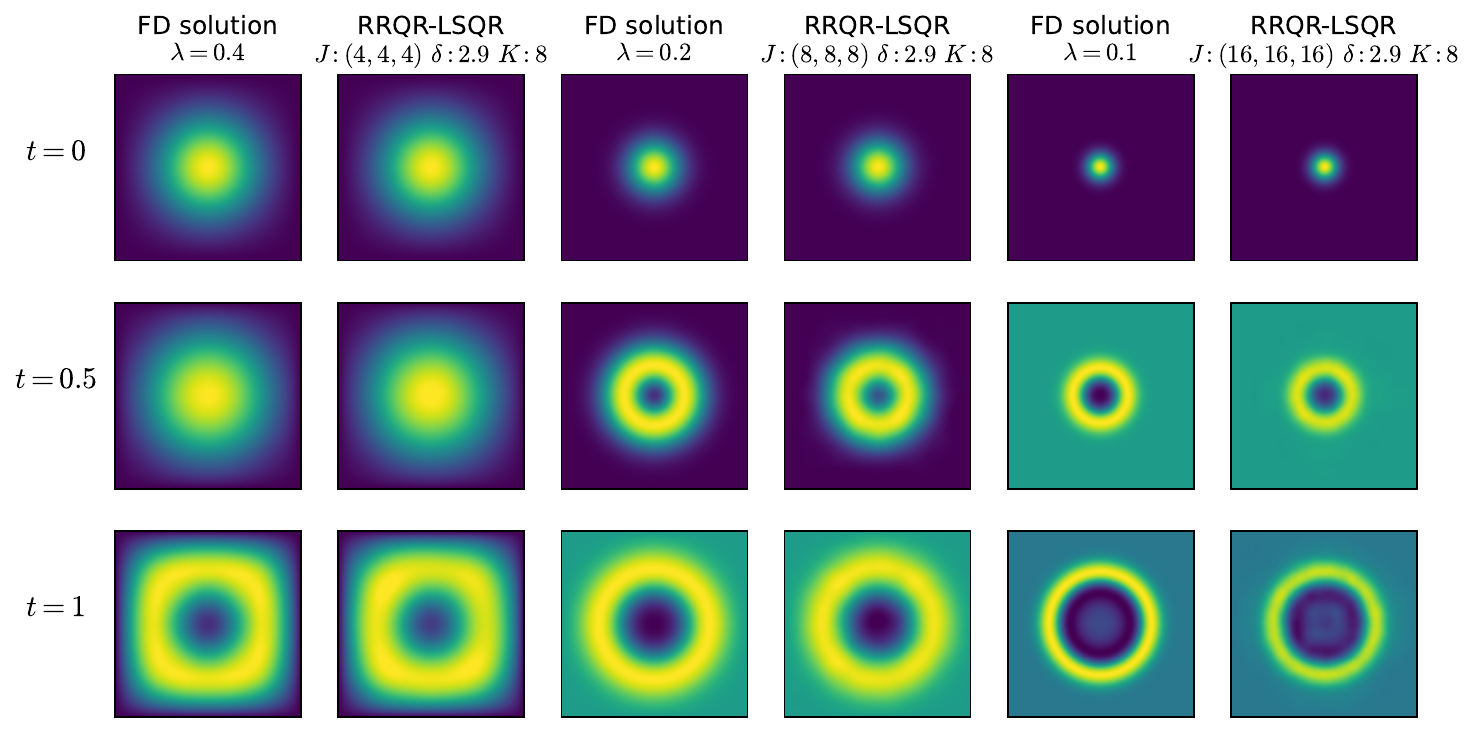}
\caption{Comparison of exact solution and \model{} solution for three of the cases in the weak scaling test for the time-varying wave equation in (2+1)D.
\label{fig:wave-equation-weak-solution}}
\end{figure}

\paragraph{Baseline comparison}{We compare our baseline \model{} above to our baseline models. A comparison of the relevant condition numbers, test accuracy and total training time is shown in \Cref{tab:wave-equation}. In this case, all baseline models achieve a similar level of accuracy, but the training time for the PINN and FBPINN models is two orders of magnitude larger than the linear solver models. The convergence curves of all linear solver models are compared in \Cref{fig:wave-equation-convergence}. Similar to the harmonic oscillator and multi-scale Laplacian problem, we find that the \modelsNoPre{} take the full 10,000 iteration steps to converge, whilst both our model and the \modelAS{} converge within a few tens of steps.
}

\begin{table}[h]
\setlength{\tabcolsep}{2pt}
\centering
\resizebox{\textwidth}{!}{
\begin{tabular}{ccccccccccc}
\toprule
Network & $(h,K)$ & $\kappa(M)$ & $\kappa(\widehat{M})$ & $\kappa(\widehat{M} S^{-1})$ & $\kappa(M^T M)$ & $\kappa(A^{-1}_{AS} A)$ & Optimiser & $\sigma$ ($\phi_{jk}$ drop \%) & $e_{L_1}$ & Time (s) \\
\midrule
FCN & (2, 128) & N/A & N/A & N/A & N/A & N/A & PINN-Adam & N/A (N/A) & 2.9$\pm$1.8e-02 & 1.1$\pm$0.0e+04 \\
FCN & (1, 8) & N/A & N/A & N/A & N/A & N/A & FBPINN-Adam & N/A (N/A) & 1.9$\pm$0.9e-02 & 1.0$\pm$0.0e+04 \\
ELM & (1, 8) & 5.1e+06 & N/A & N/A & N/A & N/A & LSQR & N/A (N/A) & 2.4$\pm$0.3e-02 & 1.2$\pm$0.1e+02 \\
ELM & (1, 8) & 5.1e+06 & N/A & N/A & 2.6e+13 & N/A & CG & N/A (N/A) & 2.4$\pm$0.3e-02 & 10.0$\pm$1.3e+01 \\
ELM & (1, 8) & 5.1e+06 & N/A & N/A & 2.6e+13 & 3.8e+08 & AS-CG & N/A (N/A) & 2.3$\pm$0.4e-02 & 6.2$\pm$0.1e+02 \\
ELM & (1, 8) & 5.1e+06 & 5.1e+06 & 2.4e+02 & N/A & N/A & RRQR-LSQR & 1e-08 (0\%) & 2.2$\pm$0.4e-02 & 1.0$\pm$0.0e+02 \\
ELM & (1, 8) & 5.1e+06 & 5.1e+06 & 2.4e+02 & N/A & N/A & RRQR-LSQR & 1e-10 (0\%) & 2.2$\pm$0.4e-02 & 8.5$\pm$0.1e+01 \\
ELM & (1, 8) & 5.1e+06 & 5.1e+06 & 2.4e+02 & N/A & N/A & RRQR-LSQR & 1e-06 (0\%) & 2.2$\pm$0.4e-02 & 1.3$\pm$0.3e+02 \\
ELM & (1, 8) & 5.1e+06 & 8.7e+05 & 2.4e+02 & N/A & N/A & RRQR-LSQR & 1e-04 (0\%) & 2.2$\pm$0.4e-02 & 1.3$\pm$0.3e+02 \\
ELM & (1, 8) & 5.1e+06 & 1.1e+04 & 8.2e+01 & N/A & N/A & RRQR-LSQR & 1e-02 (38\%) & 4.7$\pm$1.1e-02 & 6.8$\pm$0.1e+01 \\
ELM-$\sigma$ & (1, 8) & 9.9e+07 & 9.9e+07 & 2.9e+02 & N/A & N/A & RRQR-LSQR & 1e-08 (0\%) & 2.3$\pm$0.5e-02 & 8.4$\pm$0.2e+01 \\
ELM-$\sin$ & (1, 8) & 8.9e+06 & 8.9e+06 & 2.9e+02 & N/A & N/A & RRQR-LSQR & 1e-08 (0\%) & 2.3$\pm$0.5e-02 & 9.3$\pm$0.1e+01 \\
ELM & (3, 8) & 9.5e+06 & 9.5e+06 & 2.5e+02 & N/A & N/A & RRQR-LSQR & 1e-08 (0\%) & 2.2$\pm$0.2e-02 & 1.6$\pm$0.1e+02 \\
ELM & (5, 8) & 1.6e+08 & 1.6e+08 & 2.3e+02 & N/A & N/A & RRQR-LSQR & 1e-08 (0\%) & 2.4$\pm$0.8e-02 & 1.4$\pm$0.0e+02 \\
\bottomrule
\end{tabular}

}
\caption{Comparison of test error, total training time, and conditioning numbers for different models solving the time-varying wave equation in (2+1)D.}
\label{tab:wave-equation}
\end{table}

\begin{figure}[!t]
\centering
\includegraphics[width=0.75\textwidth]{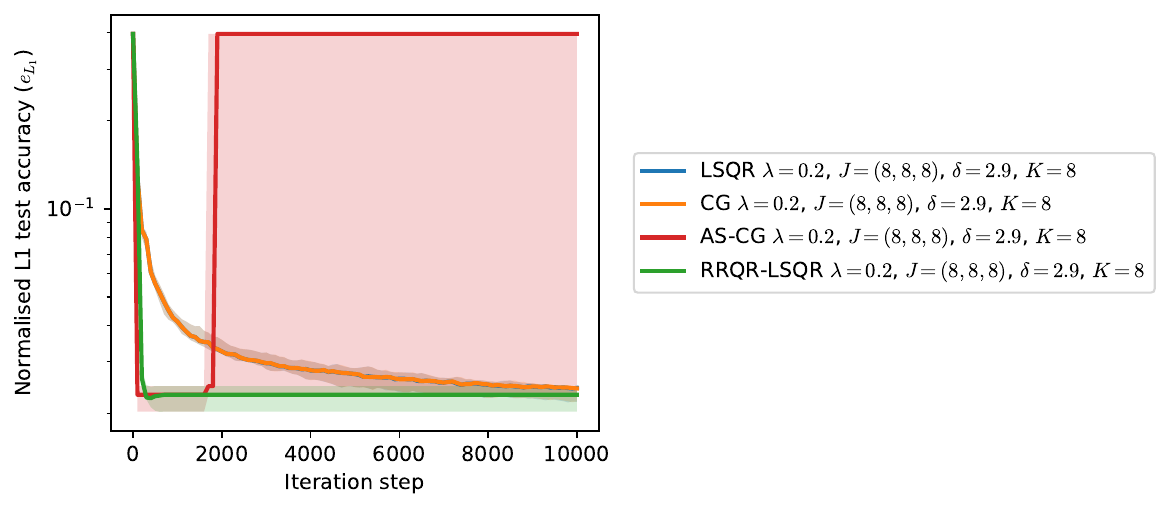}
\caption{Convergence curves of the \model{} and our baseline models when solving the time-varying wave equation in (2+1)D.
\label{fig:wave-equation-convergence}}
\end{figure}

\paragraph{Ablation tests}{
We perform the same suite of ablation tests on our model as above. The test accuracy and total training time of these models is reported in \Cref{tab:wave-equation}. The same conclusions as the multi-scale Laplacian problem can be drawn.
}

\paragraph{Strong scaling tests}{
We perform a strong scaling test, fixing the problem complexity by keeping $\lambda=0.2$, and increasing the model capacity first by varying $K$ between $4,8,16,\text{ and }32$, and second by fixing $K=8$, but simultaneously varying $S$ between $4, 8, \text{ and }16$ and $\delta$ between $1.45, 2.9, \text{ and }5.8$. Plots of the convergence curves and test accuracy versus total training time of the \model{}, \modelLSQR{}, and \modelAS{} for each scaling test are shown in \Cref{fig:wave-equation-strong-p-scaling} and \Cref{fig:wave-equation-strong-wm-scaling}. For all solvers, we find that the test accuracy improves when increasing $K$, showing good scaling behavior. However, the \modelAS{} is approximately an order of magnitude slower to train, and runs out of memory for the last case. Furthermore, the convergence rate of our model significantly worsens as $K$ increases. When using a regular overlapping rectangular domain decomposition, the maximum number of overlapping models increases exponentially with the number of input dimensions $d$, so we postulate that our block-wise preconditioning may not be as effective with high $d$ because it neglects the overlapping contributions between subdomains in $\mat{M}$. Improving the performance of our preconditioner in higher dimensions will be the subject of future work. For all solvers, similar to the harmonic oscillator and multi-scale Laplacian problem, we find that the test accuracy improves when simultaneously increasing $S$ and $\delta$.
}

\begin{figure}[!t]
\centering
\includegraphics[width=\textwidth]{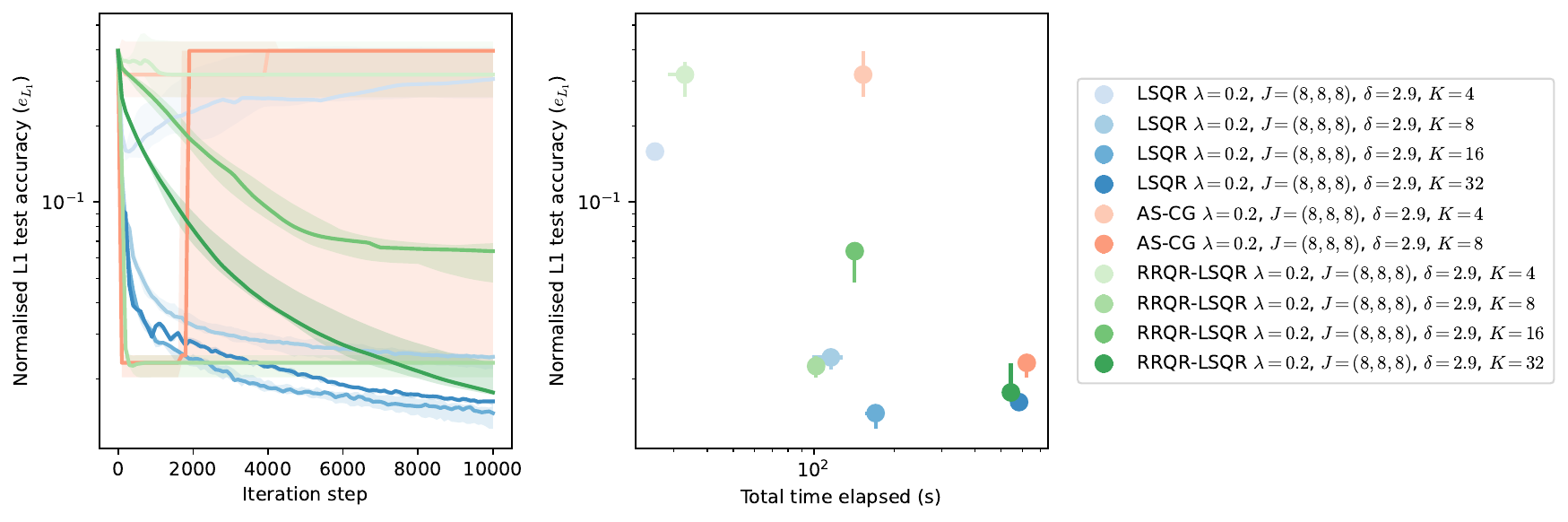}
\caption{Strong scaling test for the time-varying wave equation in (2+1)D, fixing the solution wavelength, $\lambda$, and varying the number of basis functions, $K$.
\label{fig:wave-equation-strong-p-scaling}}
\end{figure}

\begin{figure}[!t]
\centering
\includegraphics[width=\textwidth]{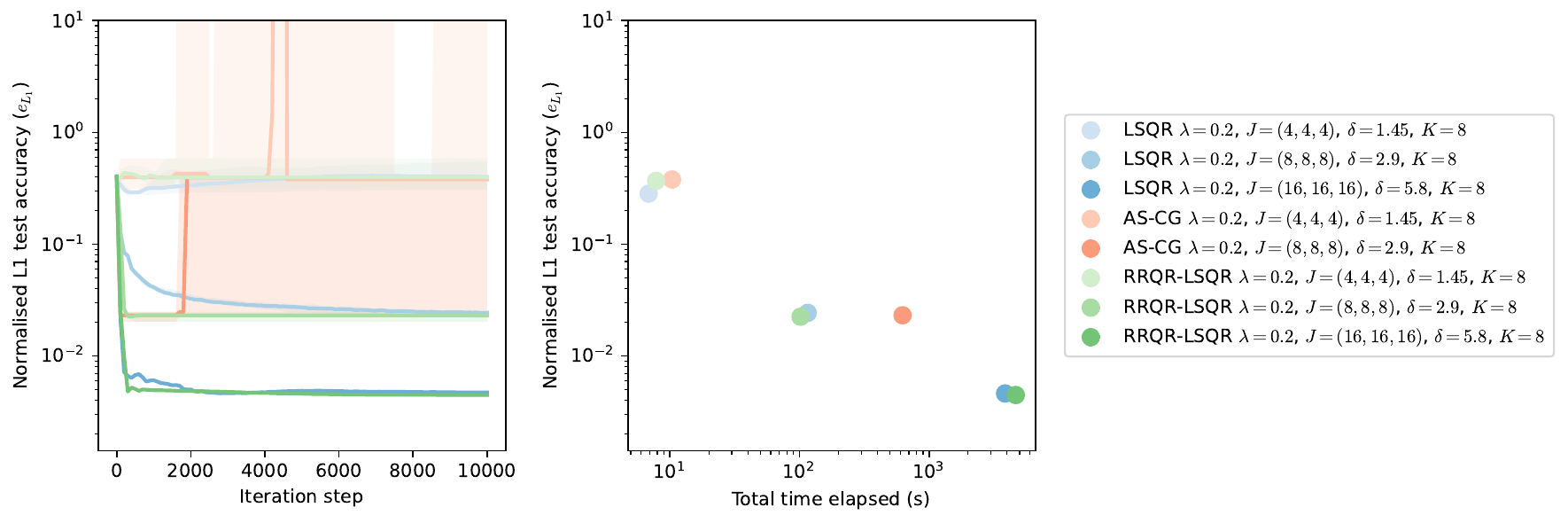}
\caption{Strong scaling test for the time-varying wave equation in (2+1)D, fixing the solution wavelength, $\lambda$, and simultaneously varying the number of subdomains, $J$, and the overlap between subdomains, $\delta$.
\label{fig:wave-equation-strong-wm-scaling}}
\end{figure}

\paragraph{Weak scaling test}{Finally, we perform a weak scaling test, increasing the problem complexity by varying $\lambda$ between $0.4, 0.2\text{ and }0.1$, whilst simultaneously increasing $S$ between $4, 8,\text{ and }16$. Plots of the convergence curves and test accuracy versus total training time of the \model{}, \modelLSQR{}, and \modelAS{} for each case are shown in \Cref{fig:wave-equation-weak-scaling}, and the exact solution and \model{} solution are plotted for different $\lambda$ values in \Cref{fig:wave-equation-weak-solution}. Similarly to the multi-scale Laplacian problem, we find that all solvers are able to solve all cases to a reasonable accuracy, but with their accuracy reducing as $\lambda$ decreases. As suggested above, improving communication between subdomains by adding multiple levels of domain decompositions may improve performance further.

\begin{figure}[!t]
\centering
\includegraphics[width=\textwidth]{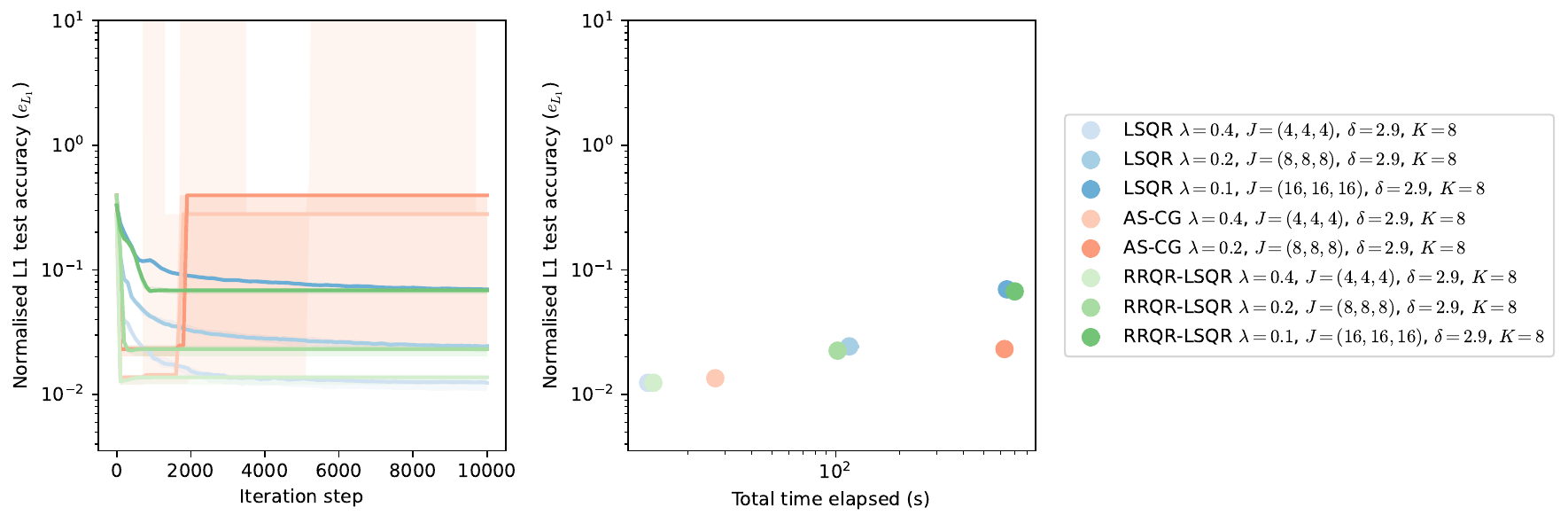}
\caption{Weak scaling test for the time-varying wave equation in (2+1)D, increasing the solution wavelength, $\lambda$, whilst simultaneously increasing the number of subdomains, $J$.
\label{fig:wave-equation-weak-scaling}}
\end{figure}

\subsection{General observations and future work}
\label{sec:discussion}

Across all experiments, we find that our $\sigma$-RRQR filtering and preconditioning strategy significantly improves both the test accuracy and convergence rate of ELM-FBPINNs compared to our baseline models. Our filtering and preconditioning always improves the conditioning of the ELM-FBPINN least-squares system, enabling faster convergence rates, with filtering having the added benefit of reducing the size of $\mat{M}$ and therefore computational time of the solver. Compared to PINNs and FBPINNs, our solver is multiple orders of magnitude faster, because it does not need to rely on nonlinear gradient descent. Compared to \modelsNoPre{}, our solver generally has a much faster rate of convergence because of preconditioning. Compared to \modelAS{}, our solver requires significantly less time and memory to compute the preconditioner, and exhibits more stable convergence curves, possibly because we drop highly numerically dependent columns and directly solve the least-squares system instead of its normal equations. 

We identified multiple directions for future work and outstanding limitations of our method, including improving performance for larger numbers of subdomains, perhaps by extending our framework to multilevel domain decompositions, as proposed by \cite{Dolean:MDD:2024}, and investigating improvements for higher numbers of input dimensions, perhaps by incorporating overlapping information into our preconditioner. It is interesting to note that whilst our preconditioning always improves the conditioning of the system, it does not always improve the final test error achieved by the solver. Further work could be carried out to understand this behavior more. Future work could also include further accelerating our solvers, which are currently implemented in SciPy, by implementing them on the GPU, and building further theoretical understanding on the convergence rate of ELM-FBPINNs.


\appendix

\section{Probabilistic refinements via random matrix theory}
\label{appendix:RMT}

While the main analysis in Section~\ref{sec:contribution} is deterministic, it is often  instructive to understand the \emph{typical-case} behaviour of the preconditioned system 
when the features are randomized. In practice, the subdomain matrices obtained after local  RRQR filtering tend to have nearly orthogonal columns and exhibit statistical independence 
across subdomains, a consequence of the random feature construction.  To formalize this intuition, we model the filtered subdomain blocks as random  semi-orthogonal matrices. 

\subsection{Random orthogonal matrices}
Specifically, we model the subdomain-restricted matrices produced by RRQR as random semi-orthogonal matrices, reflecting their approximate local orthogonality and statistical independence due to randomized features. This abstraction allows us to analytically study how the conditioning of the global system changes as a function of overlap size. In this section, we collect relevant results on random orthogonal and semi-orthogonal matrices that will be used later in this section.
We will denote random matrices with calligraphic letters (i.e. $\mathcal{Q}$ instead of $Q$) to distinguish them from ordinary matrices.

We start by recalling a definition of a natural uniform distribution on the group $\mathcal{O}_n(\R)$ \cite{Stewart_1980}:

\begin{definition}[Random orthogonal matrix] 
\label{def:DefOrthMat}
Let $\mathcal{O}_n(\mathbb{R})$ denote the group of $n \times n$ orthogonal matrices over $\mathbb{R}$. There exists a unique probability measure $\mu$ on $\mathcal{O}_n(\mathbb{R})$ such that if $\mathcal{Q} \sim \mu$ (i.e. is distributed according to $\mu$), then for all $U, V \in \mathcal{O}_n(\mathbb{R})$, the matrix $U \mathcal{Q} V$ has the same distribution as $\mathcal{Q}$. A matrix drawn from this distribution is called a \emph{random orthogonal matrix} distributed according to the Haar measure on $\mathcal{O}_n(\mathbb{R})$.
\end{definition}

The following useful and constructive characterization of such matrices is also given in \cite{Stewart_1980}. 

\begin{proposition}[Characterisation of random orthogonal matrices]   Let $\mathcal{X}$ be an $n \times n$ matrix with entries independently distributed according to a standard normal distribution.
Let $\mathcal{X} = \mathcal{Q}\mathcal{R}$ be the $QR$-factorization, where all diagonal elements of $\mathcal{R}$ are normalized to be positive. Then $\mathcal{Q}$ is a random orthogonal matrix according to the Haar measure on $\mathcal{O}_n(\mathbb{R})$. 
\end{proposition}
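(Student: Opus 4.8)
The plan is to prove that the orthogonal factor $\mathcal{Q}$ arising from the $QR$-decomposition of a standard Gaussian matrix $\mathcal{X}$ (with positive diagonal of $\mathcal{R}$) is Haar-distributed on $\mathcal{O}_n(\mathbb{R})$. By \cref{def:DefOrthMat}, the Haar measure is the \emph{unique} probability measure that is invariant under both left and right multiplication by orthogonal matrices. Since uniqueness is given, it suffices to verify that the law of $\mathcal{Q}$ satisfies this bi-invariance property. The central fact I would exploit is the rotational invariance of the multivariate standard normal distribution: if $\mathcal{X}$ has i.i.d.\ $N(0,1)$ entries, then for any fixed $U, V \in \mathcal{O}_n(\mathbb{R})$ the matrix $U \mathcal{X} V$ has \emph{exactly the same distribution} as $\mathcal{X}$. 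This holds because the density of $\mathcal{X}$ depends only on $\operatorname{tr}(\mathcal{X}^\top \mathcal{X}) = \|\mathcal{X}\|_F^2$, which is preserved under orthogonal transformations, and because such transformations have Jacobian determinant $\pm 1$.

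The first step is to establish that the $QR$-factorization with the positive-diagonal normalization is \emph{well-defined and unique} almost surely. A Gaussian matrix is invertible with probability one, and for an invertible matrix the decomposition $\mathcal{X} = \mathcal{Q}\mathcal{R}$ with $\mathcal{Q}$ orthogonal and $\mathcal{R}$ upper triangular with strictly positive diagonal entries is unique (this is the standard Gram--Schmidt construction, where positivity of the diagonal removes the sign ambiguity in each orthonormalized column). Denote by $\Phi$ the resulting measurable map $\mathcal{X} \mapsto \mathcal{Q} = \Phi(\mathcal{X})$, defined on the full-measure set of invertible matrices.

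The second step is to verify right-invariance. Fix $V \in \mathcal{O}_n(\mathbb{R})$ and consider $\mathcal{X} V = \mathcal{Q}\mathcal{R}V$. Since $\mathcal{R}V$ is generally not upper triangular, this is not immediately a $QR$-factorization; however, I would instead compare via the distributional identity $\mathcal{X} V \stackrel{d}{=} \mathcal{X}$ and then apply $\Phi$ to both sides. The cleaner route is to handle left-invariance first: for fixed $U \in \mathcal{O}_n(\mathbb{R})$, we have $U\mathcal{X} = (U\mathcal{Q})\mathcal{R}$, and since $U\mathcal{Q}$ is orthogonal while $\mathcal{R}$ retains its positive diagonal, this \emph{is} the canonical $QR$-factorization of $U\mathcal{X}$. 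Hence $\Phi(U\mathcal{X}) = U\Phi(\mathcal{X}) = U\mathcal{Q}$. Combining this with $U\mathcal{X} \stackrel{d}{=} \mathcal{X}$ gives $U\mathcal{Q} = \Phi(U\mathcal{X}) \stackrel{d}{=} \Phi(\mathcal{X}) = \mathcal{Q}$, which is exactly left-invariance of the law of $\mathcal{Q}$.

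The main obstacle, and the step requiring the most care, is right-invariance, because right-multiplication by $V$ does not preserve upper-triangularity, so $\Phi(\mathcal{X} V) \neq \Phi(\mathcal{X}) V$ in general. I would resolve this not by a direct factorization manipulation but by invoking the uniqueness clause of \cref{def:DefOrthMat}: once I show that $\mathcal{Q}$ has a law invariant under left multiplication by every $U \in \mathcal{O}_n(\mathbb{R})$, I can argue that left-invariance alone, together with the fact that $\mathcal{O}_n(\mathbb{R})$ is a compact group acting transitively on itself, forces the law to be the Haar measure. Concretely, the normalized Haar measure is the unique left-invariant probability measure on a compact group, so a left-invariant probability law on $\mathcal{O}_n(\mathbb{R})$ must coincide with $\mu$; bi-invariance then follows automatically since Haar measure on a compact group is two-sided invariant. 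Thus the argument reduces to carefully justifying that left-invariance pins down the measure uniquely, which is a standard fact from the theory of Haar measure on compact groups that I would cite rather than reprove.
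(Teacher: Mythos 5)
Your proof is correct, but note that the paper itself offers \emph{no} proof of this proposition: it is stated as a known result with a pointer to \cite{Stewart_1980}, and the text immediately moves on to its consequence \eqref{eq:GaussNorm}. What you have written is essentially the classical argument from that literature, reconstructed correctly: almost-sure invertibility of $\mathcal{X}$ gives a well-defined measurable factorization map $\Phi$ (uniqueness of $QR$ with positive diagonal); left-equivariance $\Phi(U\mathcal{X}) = U\Phi(\mathcal{X})$ combined with the rotational invariance $U\mathcal{X} \stackrel{d}{=} \mathcal{X}$ of the Gaussian ensemble yields left-invariance of the law of $\mathcal{Q}$; and uniqueness of the left-invariant probability measure on a compact group then identifies that law with Haar measure. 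Two points in your write-up deserve emphasis because they are exactly where a careless version would break. First, you correctly observe that right-invariance cannot be checked directly, since $\Phi(\mathcal{X}V) \neq \Phi(\mathcal{X})V$ in general; many sketchy accounts gloss over this, and your detour through uniqueness is the standard and correct repair. Second, the paper's \cref{def:DefOrthMat} only asserts uniqueness of $\mu$ \emph{among bi-invariant} probability measures, whereas your argument needs the stronger standard fact that a left-invariant Borel probability measure on a compact group is already unique (and automatically two-sided invariant, compact groups being unimodular); you are right to flag this as an external fact to be cited rather than derived from the paper's definition alone. The net effect is that your proposal supplies a self-contained justification for a statement the paper delegates entirely to a reference, which is a strict improvement in rigor over the paper's presentation.
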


From this it follows that for each column $\mathcal{Q}^{(i)}$ separately, one has
\begin{equation} \label{eq:GaussNorm}
\mathcal{Q}_j^{(i)} = \frac{X_j}{\sqrt{\sum_{k=1}^n X_k^2}},
\end{equation}

with the $X_j$ independent standard normal random variables.
This is certainly true for the first column, as the first step of Gramm-Schmidt simply normalizes the first column of $\mathcal{X}$. By the invariance of $\mathcal{Q}$ under column permutations (which follows from definition \ref{def:DefOrthMat}), it then follows for the other columns as well.

\begin{definition}[Random semi-orthogonal matrix]
Let $\mathcal{Q}$ be a random orthogonal matrix of size $m \times m$, and let $\mathcal{A}$ be the $m \times k$ matrix obtained by selecting any $k \leq m$ columns of $\mathcal{Q}$. Then $\mathcal{A}$ is called a \emph{random semi-orthogonal matrix}.
\end{definition}
By \Cref{def:DefOrthMat}, the distribution of $\mathcal{A}$ does not depend on the particular choice of columns of $\mathcal{Q}$. These matrices are also called $k$-frames in the random matrix literature \cite{Tropp_2011}. We call them random semi-orthogonal here to stress the fact that $\mathcal{A}^T \mathcal{A} = I$. 

The following result, i.e. that the product of two independent Haar-distributed orthogonal matrices is itself a Haar-distributed is not usually presented as standalone theorem in the relevant literature \cite{DiaconisShahshahani1987, Meckes2019} but rather as a consequence of the invariance properties of the Haar measure. It is considered standard and foundational in the theory of probability on groups. In order to make the presentation self consistent and accessible in the current context we state this here and give a short proof for random orthogonal matrices.

\begin{proposition}[Invariance under multiplication] \label{prop:OrthInvariant}
Let $\mathcal{P}, \mathcal{Q} \in \mathcal{O}_n(\mathbb{R})$ be independent random orthogonal matrices. Then their product $\mathcal{Z} = \mathcal{P} \mathcal{Q}$ is also a random orthogonal matrix. In particular, $\mathcal{Z}$ has the same distribution as $\mathcal{P}$ and $\mathcal{Q}$.
\end{proposition}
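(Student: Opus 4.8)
The plan is to verify two separate claims: first, that $\mathcal{Z} = \mathcal{P}\mathcal{Q}$ lands in $\mathcal{O}_n(\mathbb{R})$ almost surely, and second, that its law coincides with the Haar measure $\mu$ of \cref{def:DefOrthMat}. The first is immediate from $\mathcal{Z}^T \mathcal{Z} = \mathcal{Q}^T \mathcal{P}^T \mathcal{P} \mathcal{Q} = \mathcal{Q}^T \mathcal{Q} = I$, so the entire content of the proposition lies in identifying the distribution. The key tool will be the defining bi-invariance property of $\mu$ together with its uniqueness, both recorded in \cref{def:DefOrthMat}.

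For the distributional claim I would use a conditioning argument that isolates exactly the right-invariance of $\mu$. Fix a deterministic $q \in \mathcal{O}_n(\mathbb{R})$ and consider the conditional law of $\mathcal{Z}$ given $\mathcal{Q} = q$. Because $\mathcal{P}$ and $\mathcal{Q}$ are independent, this conditional law is simply the law of $\mathcal{P}q$. Since $\mathcal{P} \sim \mu$, the right-invariance in \cref{def:DefOrthMat} (taking $U = I$, $V = q$) gives $\mathcal{P}q \overset{d}{=} \mathcal{P} \sim \mu$. Thus the conditional law of $\mathcal{Z}$ given $\mathcal{Q} = q$ equals $\mu$ for every admissible $q$; as this does not depend on $q$, integrating against the law of $\mathcal{Q}$ shows that the marginal law of $\mathcal{Z}$ is $\mu$ as well, and incidentally that $\mathcal{Z}$ is independent of $\mathcal{Q}$.

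An equivalent route, perhaps more in the spirit of the stated characterization, is to check bi-invariance directly and then invoke uniqueness: writing $U \mathcal{Z} V = (U\mathcal{P})(\mathcal{Q}V)$, one notes that $(U\mathcal{P}, \mathcal{Q}V)$ has the same joint law as $(\mathcal{P}, \mathcal{Q})$, since each factor is individually invariant by \cref{def:DefOrthMat} and independence is preserved under deterministic maps. Applying the continuous product map then yields $U\mathcal{Z}V \overset{d}{=} \mathcal{P}\mathcal{Q} = \mathcal{Z}$. Since the law of $\mathcal{Z}$ is therefore a bi-invariant probability measure on $\mathcal{O}_n(\mathbb{R})$, the uniqueness asserted in \cref{def:DefOrthMat} forces it to equal $\mu$.

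Both arguments are short, and the main point requiring care is measure-theoretic rather than algebraic. One must justify that conditioning on $\mathcal{Q}$ is legitimate, which follows because the map $(P,Q) \mapsto PQ$ is continuous, hence measurable, on the compact group $\mathcal{O}_n(\mathbb{R})$, so regular conditional distributions exist; and one must apply the invariance of $\mu$ with the correct assignment of deterministic versus random roles. The genuine subtlety is simply recognizing that independence lets us treat $\mathcal{Q}$ as frozen while applying the invariance of $\mathcal{P}$. Once this is in place, either route closes the proof immediately.
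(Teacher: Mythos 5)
Your primary argument is correct and is essentially the paper's own proof with the roles of the two factors interchanged: the paper conditions on $\mathcal{P}=P$ and uses \emph{left}-invariance of the law of $\mathcal{Q}$ to get $\mathbb{P}(P\mathcal{Q}\in H)=\mathbb{P}(\mathcal{Q}\in H)$ before integrating over $P$, whereas you condition on $\mathcal{Q}=q$ and use \emph{right}-invariance of the law of $\mathcal{P}$; the measure-theoretic points you flag (measurability of the product map, existence of regular conditional distributions) are exactly the ones the paper glosses over, and your side remark that $\mathcal{Z}$ is independent of $\mathcal{Q}$ is a valid bonus the paper does not state.

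Your second route is genuinely different from the paper's: you show the law of $\mathcal{Z}$ is bi-invariant by noting $(U\mathcal{P},\mathcal{Q}V)$ has the same joint law as $(\mathcal{P},\mathcal{Q})$ and pushing forward through the product map, then invoke the \emph{uniqueness} clause of \cref{def:DefOrthMat} to conclude the law must be $\mu$. This avoids conditioning and disintegration entirely, needing only that independence and marginal invariance survive the deterministic maps $P\mapsto UP$, $Q\mapsto QV$; the cost is that it leans on uniqueness of the Haar measure, which the paper states in its definition but never actually exploits. The conditioning route uses less of the definition (invariance only) but more measure theory; the uniqueness route is shorter and more conceptual. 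Both close the proof; there is no gap in either.
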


\begin{proof}
The key property of the Haar measure is its invariance under both left and right multiplication by fixed orthogonal matrices. Concretely, if $\mathcal{Q}$ is Haar-distributed and $P \in \mathcal{O}_n(\mathbb{R})$ is fixed, then the product $P\mathcal{Q}$ has the same distribution as $\mathcal{Q}$. This invariance implies that multiplying a Haar-distributed matrix by a fixed orthogonal matrix does not alter its distribution.

Now, let $\mathcal{P}$ and $\mathcal{Q}$ be independent Haar-distributed orthogonal matrices, and define $\mathcal{Z} = \mathcal{P}\mathcal{Q}$. We want to show that $\mathcal{Z}$ is also Haar-distributed.

Let $H \subset \mathcal{O}_n(\mathbb{R})$ be any measurable subset. Conditioning on a fixed realization $\mathcal{P} = P$, we have:
\[
\mathbb{P}(\mathcal{Z} \in H \mid \mathcal{P} = P) 
= \mathbb{P}(P\mathcal{Q} \in H)
= \mathbb{P}(\mathcal{Q} \in P^{-1}H),
\]
where \( P^{-1}H = \{Q \in \mathcal{O}_n \mid PQ \in H\} \). Since $\mathcal{Q}$ is Haar-distributed and $P$ is fixed, we have
\[
\mathbb{P}(\mathcal{Q} \in P^{-1}H) = \mathbb{P}(\mathcal{Q} \in P^TH) = \mathbb{P}(\mathcal{Q} \in H),
\]
by left invariance of the Haar measure.

Integrating over all possible values of $\mathcal{P}$, we obtain:
  \begin{align*}
    \mathbb{P}(\mathcal{Z} \in H) &= \int_{\mathcal{O}_n} \mathbb{P}(\mathcal{Z} \in H \mid \mathcal{P}=P )\ d \mu(P) = \int_{\mathcal{O}_n} \mathbb{P}(\mathcal{Q} \in H)\ d\mu(P) \\
      &= \mathbb{P}(\mathcal{Q} \in H) \int_{\mathcal{O}_n} d \mu(P) = \mathbb{P}(\mathcal{Q} \in H). 
  \end{align*}
since $\mu$ is a probability measure. Therefore, $\mathcal{Z}$ is Haar-distributed.
\end{proof}

\begin{remark}[Numerical intuition]
This result formalizes an intuitive property: if we multiply two orthogonal matrices, each randomly sampled in a systematic way (e.g., using the QR decomposition of Gaussian matrices), the result is still randomly distributed in the same way in the space of orthogonal matrices. This closure under multiplication is a feature of compact groups like $\mathcal{O}_n(\mathbb{R})$, and ensures that the "orthogonal randomness" is preserved under composition, an important property in randomized algorithms and preconditioner analysis. We say that the statistical properties of such matrices are preserved under orthogonal transformations and compositions and their use in iterative solvers or algorithms does not introduce additional bias or artifacts.
\end{remark}

For our application, we are interested in the operator norm of the product of blocks of orthogonal matrices. A precise investigation of this quantity is beyond the scope of this paper. Instead, we give a bound on the expected Frobenius norm, which is easier to handle. 

\begin{proposition}[Expected Frobenius norm of product] \label{proposition:frobenius_bound}
Let $\mathcal{P}, \mathcal{Q} \in \mathcal{O}_n(\mathbb{R})$ be independent random orthogonal matrices drawn from the Haar measure. Let $\mathcal{P}_1, \mathcal{Q}_1$ be $\ell \times K$ blocks of $\mathcal{P}$ and $\mathcal{Q}$ respectively. Then we have 
\[ \mathbb{E}\| \mathcal{Q}_1^T \mathcal{P}_1 \|_{\mathrm{F}} \leq \frac { K\sqrt{\ell}}{n}. \]

\end{proposition}
\begin{proof}
Let $\mathcal{A} = \mathcal{Q}_1^T \mathcal{P}_1$, with entries $a_{ij}$ for $i,j \ \in \{1, \cdots, K\}$. By \ref{eq:GaussNorm}, the columns of $\mathcal{Q}_1$ have entries of the form 
\[ \mathcal{Q}_k^{(m)} = \frac{X_k}{\sqrt{\sum_{k=1}^n X_k^2}},
\]
with the $X_k$ independent standard normal random variables.
The columns of $\mathcal{P}_1$ have the same form. Hence an entry $a_{ij}$ of $\mathcal{A}$ is of the form
\[ a_{ij} = \sum_{k = 1}^{\ell} \frac{X_k Y_k}{\sqrt{\sum_{k=1}^n X_k^2} \sqrt{\sum_{k=1}^n Y_k^2}},\]
with the $X_k, Y_k$ independent standard normal. Taking the square and expectation of this expression yields
\[ \mathbb{E}\ a_{ij}^2 = \sum_{k = 1}^{\ell} \left (\mathbb{E}\frac{X_k^2}{\sum_{k=1}^n X_k^2} \mathbb{E} \frac{Y_k^2}{\sum_{k=1}^n Y_k^2} \right ) = \sum_{k = 1}^{\ell} \frac{1}{n^2} = \frac{\ell}{n^2}.\]
Note that we can eliminate the cross terms in the above square, since they have expectation 0. It follows that 
\[ \mathbb{E} \|\mathcal{A} \|_{\mathrm{F}} = \mathbb{E} \sqrt{ \sum_{i,j \leq K} a_{ij}^2} \leq \sqrt {\mathbb{E}  \sum_{i,j \leq K} a_{ij}^2} = \sqrt{ \sum_{i,j \leq K} \mathbb{E}\ a_{ij}^2 } = \frac{ K \sqrt{\ell}}{n},\]
where we have used Jensen's inequality and the fact that taking the square root is a concave function.
\end{proof}

\begin{remark} It is easily calculated that 
\[\mathbb{E}\| \mathcal{P}_1 \|_{\mathrm{F}} = \mathbb{E}\| \mathcal{Q}_1 \|_{\mathrm{F}} = \sqrt{\frac{K\ell}{n}}.\] 

Applying the submultiplicativity of the Frobenius norm and using the independence of $\mathcal{P}_1$ and $\mathcal{Q}_1$ would give 
\[ \mathbb{E} \|\mathcal{Q}_1^T \mathcal{P}_1 \|_{\mathrm{F}} \leq \frac{K \ell}{n}.  \]
So the above derived bound is slightly tighter than this. 
\end{remark}
Finally we cite the following bound from \cite{Tropp_2011}, which uses far more sophisticated methods to estimate the spectral norm of a submatrix directly.

\begin{proposition}[Expected spectral norm of a principal block {\cite{Tropp_2011}}] \label{tropp_bound}
Let $\mathcal{P}_{1} \in \mathbb{R}^{\ell \times K}$ be the top-left block of a Haar-distributed orthogonal matrix $\mathcal{P} \in \mathcal{O}_n(\mathbb{R})$. Then
\[
\mathbb{E}\left\| \mathcal{P}_{1} \right\| \leq \left(1 + \frac{K}{2n} \right) \frac{\sqrt{K} + \sqrt{\ell}}{\sqrt{n}}.
\]
\end{proposition}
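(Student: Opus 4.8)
The plan is to prove the bound by comparison with Gaussian matrices, exploiting the fact that a random semi-orthogonal frame can be generated by orthogonalizing a Gaussian matrix. Concretely, let $\mathcal{G} \in \mathbb{R}^{n \times r}$ have i.i.d.\ standard normal entries. By the characterisation of Haar matrices recalled above (the $QR$-factorisation of a Gaussian matrix yields a Haar-distributed orthogonal factor), the first $r$ columns of $\mathcal{P}$ have the same distribution as the semi-orthogonal factor $\mathcal{Q} = \mathcal{G}(\mathcal{G}^T\mathcal{G})^{-1/2}$ obtained by symmetric orthogonalisation (the polar factor of $\mathcal{G}$ is again a uniformly random $r$-frame). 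Writing $E_\ell$ for the projection onto the first $\ell$ coordinates, the block of interest satisfies $\mathcal{P}_{11} \sim E_\ell^T \mathcal{Q} = (E_\ell^T \mathcal{G})\,(\mathcal{G}^T\mathcal{G})^{-1/2}$. Since $E_\ell^T \mathcal{G}$ is exactly the top $\ell \times r$ block $\mathcal{G}_{11}$, which is itself a standard Gaussian matrix, this reduces the problem to controlling $\mathcal{G}_{11}$ and the normalisation $(\mathcal{G}^T\mathcal{G})^{-1/2}$.

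First I would bound the two factors separately. For the numerator, Gordon's Gaussian comparison inequality gives $\mathbb{E}\|\mathcal{G}_{11}\| \le \sqrt{\ell} + \sqrt{r}$, the classical estimate for the expected operator norm of an $\ell \times r$ Gaussian matrix, which already matches the $\sqrt{r}+\sqrt{\ell}$ appearing in the statement. For the normalisation, observe that $\mathcal{G}^T\mathcal{G}$ is an $r \times r$ Wishart matrix with $n$ degrees of freedom and mean $n I_r$, so $(\mathcal{G}^T\mathcal{G})^{-1/2}$ concentrates around $n^{-1/2} I_r$; in particular the smallest singular value of $\mathcal{G}$ obeys $\mathbb{E}\,\sigma_{\min}(\mathcal{G}) \gtrsim \sqrt{n} - \sqrt{r}$, again by Gaussian comparison.

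A crude combination via submultiplicativity, $\|\mathcal{P}_{11}\| \le \|\mathcal{G}_{11}\| \cdot \sigma_{\min}(\mathcal{G})^{-1}$, already yields a bound of the form $(\sqrt{\ell}+\sqrt{r})/(\sqrt{n}-\sqrt{r})$, which has the correct leading order $(\sqrt{\ell}+\sqrt{r})/\sqrt{n}$ but the wrong correction term. To recover the sharp factor $(1 + r/(2n))$, I would instead expand the normalisation to second order: writing $\mathcal{G}^T\mathcal{G} = n(I_r + \Delta)$ with $\Delta$ mean-zero and of order $n^{-1/2}$, a Taylor expansion gives $(\mathcal{G}^T\mathcal{G})^{-1/2} = n^{-1/2}(I_r - \tfrac12 \Delta + O(\Delta^2))$, so that upon taking expectations the first-order term vanishes while the second-order Wishart moments contribute an $O(r/n)$ correction. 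Combining this refined control of $\mathbb{E}(\mathcal{G}^T\mathcal{G})^{-1/2}$ with the Gaussian norm bound, and handling the mild dependence between $\mathcal{G}_{11}$ and $\mathcal{G}^T\mathcal{G}$ (for instance by conditioning or by a Cauchy--Schwarz split), produces the stated inequality.

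The main obstacle is precisely this last step: extracting the correct constant $1/2$ in the correction factor. The naive product bound loses it, so the sharp result requires either a careful moment expansion of the inverse square root of the Wishart matrix together with matching control of the cross term, or a direct concentration argument in the spirit of \cite{Tropp_2011} tailored to the joint law of $(\mathcal{G}_{11}, \mathcal{G}^T\mathcal{G})$. The two individual norm estimates are standard and essentially routine; it is the quantitative coupling that yields the $(1 + r/(2n))$ prefactor that is the delicate part, and I would expect most of the technical effort to concentrate there.
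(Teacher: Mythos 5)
Your proposal follows essentially the same route as the paper's own argument, which is likewise only a sketch: compare the Haar block to a Gaussian matrix scaled by $1/\sqrt{n}$ to obtain the baseline $(\sqrt{r}+\sqrt{\ell})/\sqrt{n}$, and defer the sharp prefactor $\left(1+\frac{r}{2n}\right)$ to Tropp's refined analysis of the Haar--Gaussian discrepancy. Your version is in fact more concrete than the paper's (the polar-factor representation $\mathcal{Q} = \mathcal{G}(\mathcal{G}^T\mathcal{G})^{-1/2}$, Gordon's bound, and the Wishart expansion of the normalisation), and you correctly identify that the naive product bound only yields $(\sqrt{\ell}+\sqrt{r})/(\sqrt{n}-\sqrt{r})$, so the acknowledged gap in your plan is exactly the gap the paper also leaves to the cited reference.
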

\begin{proof}
Although rigorous proof of this result is more involved we give here the main ideas. To estimate the expected norm of the principal block $\mathcal{P}_{1}$ of a Haar-distributed orthogonal matrix, we leverage the rotational invariance of the Haar measure, which implies that any fixed $\ell \times K$ block has the same distribution. The behavior of $\mathcal{P}_{1}$ is then compared to that of a normalized Gaussian matrix with i.i.d.\ $\mathcal{N}(0,1)$ entries, whose expected operator norm is known to be bounded by $\sqrt{K} + \sqrt{\ell}$. This yields a baseline estimate of $(\sqrt{K} + \sqrt{\ell})/\sqrt{n}$. Finally, Tropp \cite{Tropp_2011} refines this comparison by accounting for the difference between the true Haar marginal and the Gaussian approximation, introducing a correction factor $\left(1 + \frac{K}{2n}\right)$ that captures the residual dependence structure in $\mathcal{P}_{1}$.
\end{proof}
\begin{corollary} \label{corollary:spectral_bound} Let $\mathcal{P}_1$ and $\mathcal{Q}_1$ be as in proposition \ref{proposition:frobenius_bound}. Let $\varepsilon_{\ell,K,n}$ denote the quantity
\[\varepsilon_{\ell,K,n} =\left(1 + \frac{K}{2n}\right)
\frac{\sqrt{K} + \sqrt{\ell}}{\sqrt{n}}. \]
Then we have
\[ 
\mathbb{E}\|\mathcal{Q}_1^T \mathcal{P}_1\|_2 \leq \varepsilon_{\ell,K,n}^2.
\]
\end{corollary}
\begin{proof}
This follows directly from proposition \ref{tropp_bound} and the submultiplicativity of the spectral norm.
\end{proof}
\begin{remark}
Applying the methods of \cite{Tropp_2011} directly to $\mathcal{Q}_1^T \mathcal{P}_1$ would likely lead to a tighter estimate than using the submultiplicativity of the spectral norm. However, this is beyond the scope of this paper.
\end{remark}


\subsection{Estimating block interactions via random matrix theory}
To carry out a probabilistic analysis, we focus on the interaction matrices \(\mat{A}_j \;=\; \mat{Q}_{j+1,j}^\top \mat{Q}_{j,j } 
\quad\in \mathbb{R}^{K\times K}
\) arising from the overlapping blocks \(\mat{Q}_{j+1,j}, \mat{Q}_{j,j} \ \in\ \mathbb{R}^{\ell \times K} \).  When the features within each subdomain are generated by a randomized construction, 
it is natural to model each block as a realisation of a random matrix \(\mathcal{Q}\) representing the \emph{top} \(\ell\) rows of the first \(K\) columns of an \(n\times n\) Haar-distributed orthogonal matrix \(\mathcal{P} \in \mathcal{O}_n\). As in the previous paragraph, let $\varepsilon_{\ell,K,n}$ denote 
\[\varepsilon_{\ell,K,n} =\left(1 + \frac{K}{2n}\right)
\frac{\sqrt{K} + \sqrt{\ell}}{\sqrt{n}}. \]
Applying \ref{corollary:spectral_bound} to $\mat{A}_j$ yields
\begin{equation} \label{eq:Aj-bound-with-ell}
\mathbb{E} \|\mat{A}_j\|_2 
\ \leq
\varepsilon_{\ell,K,n}^2.
\end{equation}

\paragraph{From $\alpha$ to $\kappa(\mat{Q})$}
The deterministic bound in Theorem~\ref{th:cond-bound} holds with
\(\alpha = \max_j \| \mat{A}_j \|_2\).  
Approximating $\alpha$ by its typical RMT value $\varepsilon_{\ell,K,n}$,  
and recalling that all $\mat{A}_j$ are identically distributed and independent,  
we arrive at the probabilistic estimate
\begin{equation}
\label{eq:cond-prob}
\mathbb{E}[\kappa(\mat{Q})]
\;\lesssim\;
\left( \frac{1 + 2\varepsilon_{\ell,K,n}^2}{1 - 2\varepsilon_{\ell,K,n}^2} \right)^{1/2},
\quad
\text{valid when}\quad 2\varepsilon_{\ell,K,n} < 1.
\end{equation}

\begin{remark}
The quantity $\varepsilon_{\ell,K,n}$ in~\Cref{eq:Aj-bound-with-ell} encodes the 
\emph{relative feature complexity} $K/n$ and the \emph{relative overlap size} $\ell/n$.
Small $K$ and small $\ell$ produce small $\varepsilon_{\ell,K,n}$ and hence 
a well-conditioned $\mat{Q}$.
Conversely, if either $K/n$ or $\sqrt{\ell/n}$ is large, then $\varepsilon_{\ell,K,n}^2$
approaches $1/2$ and the bound~\eqref{eq:cond-prob} in no longer effective.
From~\eqref{eq:cond-prob} we can distinguish two regimes:
\begin{itemize}
    \item \textbf{Small-overlap regime:}  
    If $\ell \ll n$ and $K \ll n$, then $\varepsilon_{\ell,K,n} \to 0$ and  
    \[
    \mathbb{E}[\kappa(\mat{Q})] \ \approx\ 1 +2\varepsilon_{\ell,K,n}^2 
    + \mathcal{O}(\varepsilon_{\ell,K,n}^4),
    \]
    indicating very mild conditioning loss.
    \item \textbf{Large-overlap regime:}  
    As $\ell/n$ increases, $\varepsilon_{\ell,K,n}$ grows and 
    the denominator $1 - 2\varepsilon_{\ell,K,n}^2$ in~\eqref{eq:cond-prob} shrinks.  
    When $\varepsilon_{\ell,K,n}^2 \to 1/2$, the bound formally blows up and beyond that limit i.e. for $\ell$ a significant fraction of $n$, the estimate is no longer applicable. 
\end{itemize}
\end{remark}

\paragraph{Numerical validation of bound}{
\Cref{eq:Aj-bound-with-ell} and \Cref{eq:cond-prob} provide a weak upper bound on $\kappa(\mat{Q})$. Specifically, the bound increases with $K$, the number of basis functions in the model, and $\ell$, the number of overlapping rows between each subdomain. We compare this result to how $\kappa(\mat{Q})$ changes numerically when increasing $K$ and $\delta$ (which analogously to $\ell$ controls the overlap between subdomains, as defined in \Cref{eq:dd}) when solving the harmonic oscillator in 1D} studied in \Cref{sec:harmonic_oscilator_results}. We define a \model{} using the same baseline hyperparameters described in \Cref{sec:harmonic_oscillator_baseline}, except that we fix $\sigma=0$ and the number of interior collocation points to be $2000$. \Cref{fig:harmonic-oscillator-appendix-scaling} shows how $\kappa(\mat{M})$ and $\kappa(\mat{Q})$ vary when changing $K$ and $\delta$. We see the same behavior expected by the bound; both condition numbers increase significantly with increasing $K$ and $\delta$. 
}

\begin{figure}[!t]
\centering
\includegraphics[width=\textwidth]{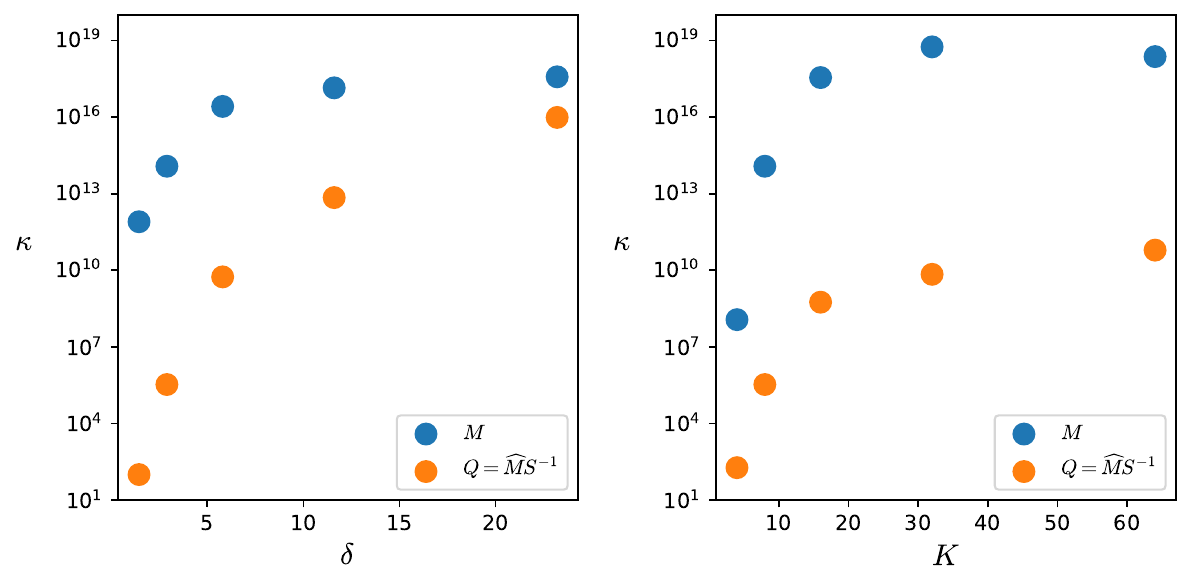}
\caption{Plots showing how the condition number of $\mat{M}$ and $\mat{Q}$ vary with $K$ and $\delta$ when solving the harmonic oscillator in 1D with our baseline \model{} described in \Cref{sec:harmonic_oscillator_baseline}.
\label{fig:harmonic-oscillator-appendix-scaling}}
\end{figure}

\section{Finite difference modeling for time-varying wave equation in (2+1)D}
\label{sec:appendix_fd}

We use finite difference modeling as the ground truth solution when studying the (2+1)D wave equation problem in \Cref{sec:wave_equation_results}. We use the \verb|SEISMIC_CPML| library \cite{Komatitsch2007} which performs staggered-grid second order finite difference modeling of the time-dependent 2D acoustic wave equation with Dirichlet boundary conditions. For ease of use, we re-implemented the original Fortran code in Python. The simulation is initialized by sampling the initial wavefield and initial wavefield derivative as defined in \Cref{sec:wave_equation_problem} on a regular grid. A high density of grid points ($\sim5\times$ spatial Nyquist frequency) is used to ensure that the simulation is high-fidelity.

\bibliographystyle{elsarticle-num}
\bibliography{refs}
\end{document}